\theoremstyle{plain}
\newtheorem{Thm}{Theorem}[section]
\newtheorem{Lem}[Thm]{Lemma}
\newtheorem{Prop}[Thm]{Proposition}
\theoremstyle{definition}
\newtheorem{Def}[Thm]{Definition}
\newtheorem{Cond}[Thm]{Condition}
\newtheorem{Rem}[Thm]{Remark}
\newtheorem*{Ack}{Acknowledgments}
\numberwithin{equation}{section}
\newcommand{\Aut}{\operatorname{Aut}}
\newcommand{\Proj}{\operatorname{Proj}}
\newcommand{\Spec}{\operatorname{Spec}}
\newcommand{\Cl}{\operatorname{Cl}}
\newcommand{\bMov}{\overline{\operatorname{Mov}}}
\newcommand{\Nef}{\operatorname{Nef}}
\newcommand{\wt}{\operatorname{wt}}
\newcommand{\ord}{\operatorname{ord}}
\newcommand{\Qsm}{\operatorname{Qsm}}
\renewcommand{\wt}{\operatorname{wt}}
\newcommand{\mbA}{\mathbb{A}}
\newcommand{\mbC}{\mathbb{C}}
\newcommand{\mbP}{\mathbb{P}}
\newcommand{\mbQ}{\mathbb{Q}}
\newcommand{\mbR}{\mathbb{R}}
\newcommand{\mbT}{\mathbb{T}}
\newcommand{\mbZ}{\mathbb{Z}}
\newcommand{\mcF}{\mathcal{F}}
\newcommand{\mcM}{\mathcal{M}}
\newcommand{\mcO}{\mathcal{O}}
\newcommand{\msF}{\mathsf{F}}
\newcommand{\msG}{\mathsf{G}}
\newcommand{\msH}{\mathsf{H}}
\newcommand{\msp}{\mathsf{p}}
\newcommand{\msq}{\mathsf{q}}
\newcommand{\msr}{\mathsf{r}}
\newcommand{\ratmap}{\dashrightarrow}
\def\imod#1{\allowbreak\mkern10mu({\operator@font mod}\,\,#1)}
\title[WCIs of type $(12, 14)$ in $\mbP (1, 2, 3, 4, 7, 11)$]{Birational geometry of weighted complete intersections of type $(12, 14)$ in $\mbP (1, 2, 3, 4, 7, 11)$}
\author[Takuzo Okada]{Takuzo Okada}
\address{Faculty of Mathematics, Kyushu University, Fukuoka 819-0395, Japan}
\email{tokada@math.kyushu-u.ac.jp}
\subjclass[2020]{14J45 \and 14E08}
\keywords{Fano variety; Birational solidity; Rationality questions}
\date{}
\begin{document}
%%%%%%%%%%%%%%%%%%%%%%%%%%%%%%%%%%
%%%%%%%%%%%%%%%%%%%%%%%%%%%%%%%%%%

\begin{abstract}
We show that any quasismooth Fano threefold weighted complete intersections of type $(12, 14)$ in $\mathbb{P} (1, 2, 3, 4, 7, 11)$ is birationally solid.
\end{abstract}

\maketitle

\tableofcontents

%%%%%%%%%%%%%%%%%%%%%%%%%%%%%%%%%%
%%%%%%%%%%%%%%%%%%%%%%%%%%%%%%%%%%
\section{Introduction} \label{sec:intro}
%%%%%%%%%%%%%%%%%%%%%%%%%%%%%%%%%%
%%%%%%%%%%%%%%%%%%%%%%%%%%%%%%%%%%

Throughout the paper, the ground field is assumed to be the complex number field $\mbC$.
Rationality problem of Fano $3$-folds has been studied extensively.
We refer readers to \cite{Pro19}, \cite{Pro22}, \cite{Pro23}, \cite{Pro24} and \cite{Pro25} for recent systematic studies by Prokhorov.
In this paper we focus on Fano $3$-folds embedded as quasismooth and well-formed complete intersections in weighted projective spaces.
We simply call such a variety as a quasismooth and well-formed Fano $3$-fold WCI, for short.
For a quasismooth and well-formed Fano $3$-fold WCI $X$, the class group of $X$ is isomorphic to $\mbZ$ (\cite[Theorem~3.2.4]{Dol}) and it is generated by the Weil divisor class $A$ corresponding to $\mcO_X (1)$.
The number $\iota_X$ such that $-K_X = \iota_X A$ in $\Cl (X)$ is called the \textit{index} of $X$.
%There are $181 = 95 + 85 + 1$ families of quasismooth and well-formed Fano $3$-fold WCIs in which $95, 85, 1$ families consist of Fano $3$-fold WCIs of codimension $1, 2, 3$, respectively.

After the works of Iskovskikh--Manin \cite{IM}, Iskovskikh \cite{Isk} and Corti--Pukhlikov--Reid \cite{CPR}, it is finally settled by Cheltsov--Park \cite{CP} that any quasismooth Fano 3-fold weighted hypersurface of index $1$ is birationally rigid, which implies that it is irrational.
A Fano variety $X$ of Picard number $1$ is said to be \textit{birationally rigid} if $X$ is not birational to a Mori fiber space other than $X$.
A generalized notion of birational solidity was introduced by Abban--Okada \cite{AO} and by Shokurov independently: a Fano variety $X$ of Picard number $1$ is \textit{birationally solid} if $X$ is not birational to a Mori fiber space over a positive dimensional base.
Note that birational solidity also implies irrationality.
The classification of birationally solid quasismooth Fano threefold weighted hypersurfaces is completed by \cite{OkSolid}.

The next stage is to consider the case of codimension $2$.
There are $85$ families of quasismooth and well-formed Fano $3$-fold WCIs of codimension $2$ and index $1$.
They are studied in \cite{IP96}, \cite{OkI} and \cite{AZ}, and it is in particular proved that $19$ families consist of birationally rigid members and $6$ families consist of birationally non-solid members.
The members in the remaining $60$ families are conjectured to be birationally solid and this is confirmed for many families by \cite{CM04}, \cite{OkII} and \cite{OkIII}.
Duarte Guerreiro \cite{DG23} investigated quasismooth and well-formed Fano $3$-fold WCIs of codimension $2$ and index greater than $1$ that form $40$ families, and it is in particular proved that none of them are birationally rigid.
To be more precise, it is proved that $18$ families consist of birationally non-solid members and the members of the remaining $22$ families are conjectured to be birationally solid (\cite[Conjecture~1.8]{DG23}).
The aim of this paper is to consider a specific family of Fano $3$-fold WCIs of codimension $2$ and index $2$ which is one of the above mentioned $22$ families, and show that the conjecture holds true for this family. 
This provides a first example of birationally solid Fano $3$-fold WCI of codimension $\ge 2$ and index $\ge 2$.

\begin{Thm} \label{mainthm}
Let $X = X_{12, 14} \subset \mbP (1, 2, 3, 4, 7, 11)$ be a quasismooth Fano $3$-fold weighted complete intersection of type $(12, 14)$.
Then $X$ is birational to a Fano $3$-fold weighted hypersurface $\hat{X} \subset \mbP (1, 1, 1, 2, 3)$ of degree $7$ admitting a terminal singularity of type $cE_6$, and $X$ is not birational to any other Mori fiber space.
In particular, $X$ is birationally solid and  $X$ is not rational.
\end{Thm}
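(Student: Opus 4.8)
The plan is to run the Sarkisov program on $X$, compute its pliability set $\Pli(X)$ explicitly, and read off from it both the birational map to $\hat X$ and the absence of any other Mori fibre space. First I would pin down the invariants and the singularities. Since $\sum a_i - \sum d_j = 28 - 26 = 2$ we have $-K_X = 2A$, so $\iota_X = 2$, and $A^3 = \tfrac{12\cdot 14}{1\cdot 2\cdot 3\cdot 4\cdot 7\cdot 11} = \tfrac{1}{11}$. Because no power $x_5^k$ has degree $12$ or $14$, both defining equations vanish at $P_5 = (0{:}\cdots{:}0{:}1)$, so $P_5 \in X$ for every member; using the monomials $x_0 x_5$ and $x_2 x_5$ to eliminate two local coordinates, quasismoothness forces $X$ to carry a terminal cyclic quotient singularity $\tfrac{1}{11}(2,4,7) \cong \tfrac{1}{11}(1,2,9)$ at $P_5$. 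A direct inspection of the coordinate points and of the $\tfrac12$-stratum $\{x_0=x_2=x_4=x_5=0\}$ shows that a general member has $\Sing X = \{P_5\}$, while a few special quasismooth members acquire additional terminal quotient points (of type $\tfrac12$, $\tfrac13$, $\tfrac14$, $\tfrac17$); these must be carried through the argument but are treated in the same manner. The candidate maximal centres are thus the nonsingular points, the curves, and the terminal quotient points.

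The core construction is the Sarkisov link initiated at $P_5$. I would take the Kawamata blowup $\sigma\colon Y \to X$ of $P_5$ with weights $(1,2,9)$ and discrepancy $\tfrac{1}{11}$, so that $\rho(Y) = 2$, and then play the two-ray game on $Y$. Realising this game as the induced toric two-ray game on the corresponding weighted blowup of $\mbP(1,2,3,4,7,11)$, one obtains a sequence of flips followed by a divisorial contraction, whose output is a Fano threefold of Picard number $1$ carrying a single non-quotient terminal singularity; an explicit coordinate and unprojection computation identifies it with the weighted hypersurface $\hat X = X_7 \subset \mbP(1,1,1,2,3)$, the $cE_6$ point arising as the image of the flipped and contracted locus. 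The decisive point is that this endpoint is a genuine Fano, that is, a Mori fibre space over a point, rather than a del Pezzo fibration or a conic bundle; this yields the asserted birational map $X \ratmap \hat X$.

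It then remains to show that no center produces a Mori fibre space over a positive-dimensional base. Using the Noether--Fano--Iskovskikh inequality together with the smallness of $A^3 = \tfrac{1}{11}$, I would exclude all nonsingular points and all curves as maximal centers, bounding multiplicities via hypertangent divisors and test surfaces as in the codimension-$2$ analysis of \cite{OkII} and \cite{OkIII}; and for each remaining terminal quotient point I would run its two-ray game and verify that it terminates at a Fano threefold, hence is either excluded or furnishes a link back to $X$. Finally I would analyse $\hat X$ itself and show that its only nontrivial Sarkisov link is the inverse link to $X$, and that it admits no link to a strict Mori fibre space.

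I expect the hardest parts to be the two-ray game identifying $\hat X$ precisely, together with its $cE_6$ singularity, and the companion analysis of maximal centers on $\hat X$: since $cE_6$ is a compound Du Val rather than a cyclic quotient point, its behaviour under the Sarkisov program requires the finer theory of maximal singularities at $cDV$ points rather than the Kawamata-blowup calculus available at quotient points. Granting these, $\Pli(X) = \{X, \hat X\}$ consists of two Fano threefolds of Picard number $1$; hence $X$ is not birational to any Mori fibre space over a positive-dimensional base, i.e. $X$ is birationally solid, and since any rational threefold is birational to a conic bundle over $\mbP^2$ and hence to a strict Mori fibre space, $X$ is not rational.
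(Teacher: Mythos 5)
Your overall strategy is the paper's: construct the link $X \ratmap \hat{X}$ from the Kawamata blowup of the $\frac{1}{11}(1,2,9)$ point via a two-ray game, exclude all other centres on $X$, and then classify the Sarkisov links from $\hat{X}$. But the proposal defers exactly the step that constitutes the paper's actual content, and in the one place where you commit to a concrete claim about $\hat{X}$, that claim is wrong. You assert that the only nontrivial link from $\hat{X}$ is the inverse link to $X$. In fact, when the coefficient $\lambda$ of the monomial $yzuv$ in the equation of $\hat{X}$ is nonzero, there are \emph{two} distinct links $\hat{X} \ratmap X$: the divisors $\hat{E}$ and $\hat{E}' = \hat{\chi}_* \hat{E}$ (where $\hat{\chi}$ is the automorphism $v \mapsto -v - \lambda y z^2$) define different valuations over the $cE_6$ point, and both initiate links back to $X$ whose composite is a nontrivial birational involution of $X$. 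More importantly, the exclusion at the $cE_6$ point — which you acknowledge needs ``finer theory'' but do not supply — is the heart of the matter and proceeds as follows in the paper: one shows the germ is $cE_6$ with underlying $cA/2$ structure after one blowup, counts the divisors of discrepancy $1$ over the point ($4$ if $\lambda \ne 0$, namely $\hat{E}, \hat{E}', \hat{F}_1, \hat{F}_2$; $3$ if $\lambda = 0$), rules out divisorial contractions of discrepancy $>1$ via Kawakita's theorem together with \cite[Proposition~3.16]{OkSolid}, and then excludes the two residual weighted blowups $\hat{\psi}_1, \hat{\psi}_2$ by running explicit two-ray games and showing $-K$ lies on the \emph{boundary} of the movable cone, so they cannot initiate links by \cite[Theorem~3.2]{AZ}. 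Without this divisor count and boundary-of-$\bMov$ argument there is no way to certify that the list of links from $\hat{X}$ is complete, and your claimed uniqueness would make the count of discrepancy-$1$ divisors inconsistent in the $\lambda \ne 0$ case.

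Two further factual slips would derail the bookkeeping. First, $\hat{X}$ does not carry ``a single non-quotient terminal singularity'': it has three singular points, of types $\frac{1}{2}(1,1,1)$, $\frac{1}{3}(1,1,2)$ and $cE_6$, and the two quotient points must themselves be excluded as maximal centres (the $\frac{1}{3}$ point via the dichotomy of the paper's Lemma~3.1: here $\ell = u$ divides $f_4 = \lambda yzu$, so the would-be quadratic involution degenerates and the Kawamata blowup is not a maximal extraction); one also needs the absence of degree-$1$ curves through the $cE_6$ point (Lemma~4.4) to exclude curves on $\hat{X}$. Second, your hedge that special quasismooth members of $X_{12,14}$ acquire extra quotient points of types $\frac{1}{2},\frac{1}{3},\frac{1}{4},\frac{1}{7}$ is incorrect: quasismoothness forces $z^4, t^3 \in \msF_1$ and $v^2 \in \msF_2$ and forces $a_{12}(y^2,t) = y\,c_{12}(y^2,t) = 0$ to have no nontrivial solution, so $\msp_w$ is the \emph{unique} singular point of every quasismooth member — no case analysis over special members arises. (The exclusion of smooth points and curves on $X$ that you propose to redo via hypertangent methods is already established in \cite{DG23}, which the paper simply quotes.) As it stands, the proposal is a correct roadmap but omits the new lemmas (the $cA/2$ and $cE_6$ divisor counts, the two movable-cone computations, and the second link $\sigma'$) that the proof actually consists of.
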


We explain the outline of the paper.
Let $X$ be a Fano $3$-fold in Theorem~\ref{mainthm}.
An elementary link $\sigma \colon X \ratmap \hat{X}$ to a (non-quasismooth) Fano $3$-fold weighted hypersurface $\hat{X}$ of degree $7$ in $\mbP (1, 1, 1, 2, 3)$  is constructed in \cite{DG23}.
We recall the construction of $\sigma$ and study $\hat{X}$ in detail in \S \ref{sec:pfmainthm}.
In view of the Sarkisov program \cite{Corti95}, the proof of Theorem~\ref{mainthm} will be done by classifying the elementary links from $X$ and from $\hat{X}$.
Half of these tasks are already done in \cite{DG23} where it is proved that $\sigma$ is the only elementary link from $X$.
We complete the classification of the elementary links from $\hat{X}$ in \S \ref{sec:hypdeg7} and \S \ref{sec:pfmainthm}.
To complete the classification, we recall the notion of maximal extraction and explain preliminary results in \S \ref{sec:prelim}.

\begin{Ack}
The author was supported by JSPS KAKENHI Grant Numbers JP22H01118 and JP24K00519.
\end{Ack}

%%%%%%%%%%%%%%%%%%%%%%%%%%%%%%%%%%
%%%%%%%%%%%%%%%%%%%%%%%%%%%%%%%%%%
\section{Preliminaries}
\label{sec:prelim}
%%%%%%%%%%%%%%%%%%%%%%%%%%%%%%%%%%
%%%%%%%%%%%%%%%%%%%%%%%%%%%%%%%%%%

In this paper, by a divisorial contraction $\varphi \colon Y \to X$, we mean a contraction of a $K_Y$-negative extremal ray from a normal variety $Y$ with only terminal singularities that contracts a prime divisor.

Let $f = \sum \alpha_I x^I \in \mbC [x_1, \dots, x_n]$ be a polynomial.
For a monomial $x^J = x_1^{j_1} \cdots x_n^{j_n}$, we write $x^J \in f$ if the coefficient of $x^J$ in $f$ is nonzero.
Let $\bm{w}$ be a weight on the variables $x_1, \dots, x_n$ defined as $\bm{w} (x_1, \dots, x_n) = (a_1, \dots, a_n)$ for some nonnegative rational numbers $a_i$.
Then, for an integer $d$, we define
\[
f_{\bm{w} = d} := \sum_{\bm{w} (x^I) = d} \alpha_I x^I.
\]

%%%%%%%%%%%%%%%%%%%%%%%%%%%%%%%%%%
\subsection{Maximal singularities}
%%%%%%%%%%%%%%%%%%%%%%%%%%%%%%%%%%

\begin{Def}
Let $X$ be a Fano variety of Picard number $1$.
Let $\varphi \colon Y \to X$ be a divisorial contraction and let $E$ be its exceptional divisor.
We say that $\varphi$ is a \textit{maximal extraction} if there is a movable linear system $\mcM$ such that
\[
\ord_E (\mcM) > n a_X (E),
\]
where 
\begin{itemize}
\item $n$ is the positive rational number such that $\mcM \sim_{\mbQ} - n K_X$, 
\item $\ord_E (\mcM) = \ord_E (\varphi^*\mcM) := \min \{\, \ord_E \varphi^*D \mid D \in \mcM \, \}$, and
\item $a_X (E) := \ord_E (K_Y-\varphi^*K_X)$ is the discrepancy of $X$ at $E$.
\end{itemize}
%We say that $\varphi$ is \textit{strong} maximal extraction if there is a movable linear system $\mcM$ on $X$ such that 
%\[
%c (X, \mcM) = \frac{a_X (E)}{\ord_E (\mcM)} < \frac{1}{n},
%\]
%where
%\[
%c (X, \mcM) := \max \{\, c \in \mbR_{\geq 0} \mid \text{The pair $(X, c \mcM)$ is canonical}\,\}.
%\]
We say that $\varphi$ is a \textit{Sarkisov extraction} if it initiates an elementary link.
A subvariety $\Gamma \subset X$ is a \textit{maximal center} (resp.\ \textit{Sarkisov center}) if there is a maximal extraction (resp.\ Sarkisov extraction) whose center is $\Gamma$.
\end{Def}

We have the following implications for these notions of extractions.

\begin{Lem}[{\cite[Lemma~2.5]{Ok3}}]
Let $\varphi \colon Y \to X$ be a divisorial contraction to a Fano variety $X$ of Picard number $1$.
If $\varphi$ is a Sarkisov extraction, then it is a maximal extraction.
\end{Lem}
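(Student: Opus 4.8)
The plan is to produce the required movable linear system by pulling it back through the elementary link that $\varphi$ initiates, and then to read off the maximal extraction inequality from the Noether--Fano--Iskovskikh inequality. Since $\varphi$ is a Sarkisov extraction, there is an elementary link $\chi \colon X \ratmap X'$ to a Mori fiber space $X'/S'$ whose first step is $\varphi$. I would fix a movable linear system $\mcH'$ on $X'$ that is ample over $S'$, and let $\mcM = \chi_*^{-1} \mcH'$ be its strict transform on $X$. Because $\Cl (X) \otimes \mbQ = \mbQ \cdot K_X$, we may write $\mcM \sim_{\mbQ} - n K_X$ for a unique $n \in \mbQ_{>0}$, so that $K_X + \tfrac{1}{n} \mcM \sim_{\mbQ} 0$. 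Writing $K_Y = \varphi^* K_X + a_X (E) E$ and $\mcM_Y = \varphi^* \mcM - \ord_E (\mcM) E$, this gives
\[
K_Y + \tfrac{1}{n} \mcM_Y \sim_{\mbQ} \bigl( a_X (E) - \tfrac{1}{n} \ord_E (\mcM) \bigr) E ,
\]
so that the desired inequality $\ord_E (\mcM) > n\, a_X (E)$ is equivalent to $K_Y + \tfrac{1}{n} \mcM_Y$ having \emph{negative} coefficient along $E$.

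Next I would bring in the two structural inputs from the Sarkisov program. First, since $\chi$ is an elementary link it is, by definition, not an isomorphism of Mori fiber spaces, and the Noether--Fano--Iskovskikh inequality then forces the canonical threshold $c := c (X, \mcM)$ to satisfy $c < \tfrac{1}{n}$; equivalently, the pair $(X, \tfrac{1}{n} \mcM)$ is not canonical. Second, the fact that $\varphi$ is the \emph{first} step of an elementary link pins down the divisor it extracts: the extraction initiating a link must be crepant for the threshold pair $(X, c\, \mcM)$, that is, $a_X (E) = c\, \ord_E (\mcM)$ (note $\ord_E (\mcM) > 0$, since $a_X (E) > 0$ as $X$ is terminal and $\varphi$ is $K_Y$-negative). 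Indeed, were $E$ not a divisor computing $c$, the non-canonicity of $(X, \tfrac{1}{n} \mcM)$ would persist on $Y$, and the subsequent two-ray game on $Y$ could not terminate as a single link to a Mori fiber space.

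Combining the two inputs is then immediate:
\[
a_X (E) = c\, \ord_E (\mcM) < \tfrac{1}{n} \ord_E (\mcM),
\]
which is exactly $\ord_E (\mcM) > n\, a_X (E)$, so $\varphi$ is a maximal extraction. The discrepancy bookkeeping above is routine; the genuine obstacle is justifying the second structural input, namely that the divisorial contraction starting the link necessarily extracts a divisor realizing the canonical threshold of $\mcM$. This is where the theory of the Sarkisov program (the Noether--Fano inequality together with termination and the two-ray game description of elementary links) enters, and it is the step I would either invoke from \cite{Corti95} or verify directly by analyzing the two extremal rays of $\overline{\mathrm{NE}} (Y)$ and the sign of $K_Y + \tfrac{1}{n} \mcM_Y$ along the second ray.
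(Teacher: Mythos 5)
The paper itself offers no proof of this lemma --- it is imported verbatim from \cite[Lemma~2.5]{Ok3} --- so your attempt has to be measured against the standard argument there. The first half of your proposal is correct and is indeed how any proof begins: take $\mcH'$ ample over $S'$, set $\mcM = \chi_*^{-1}\mcH' \sim_{\mbQ} -nK_X$, and observe that the desired inequality $\ord_E(\mcM) > n\,a_X(E)$ is precisely the negativity of the coefficient $e = a_X(E) - \tfrac{1}{n}\ord_E(\mcM)$ of $E$ in $K_Y + \tfrac{1}{n}\mcM_Y$. The Noether--Fano step is also fine: since $K_X + \tfrac{1}{n}\mcM \sim_{\mbQ} 0$ is nef and $\chi$ is not biregular, the pair $(X, \tfrac{1}{n}\mcM)$ is not canonical. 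But that is where the genuine gap sits, exactly where you flagged it: Noether--Fano produces \emph{some} valuation $F$ with $\ord_F(\mcM) > n\,a_X(F)$, and nothing in your argument ties that valuation to the divisor $E$ extracted by $\varphi$. Your bridge --- that $E$ must be crepant for the threshold pair, $a_X(E) = c\,\ord_E(\mcM)$ --- is both stronger than needed and unjustified, and the heuristic you give for it is not a valid inference: whether the two-ray game from $Y$ runs to completion is a statement about the Mori dream space structure of $Y$ (its nef and movable cones, as in \cite[Theorem~3.2]{AZ}), and it is entirely insensitive to the singularities of the auxiliary pair $(Y, \tfrac{1}{n}\mcM_Y)$; links exist and terminate whether or not that pair is canonical. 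Moreover, nothing prevents $\mcM$ from acquiring a deeper maximal singularity at some center unrelated to the link, in which case the canonical threshold is computed elsewhere and your equality fails while the lemma's strict inequality for $E$ still holds.

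The correct identification of $E$ is the direct two-ended argument you relegate to your final sentence, and it bypasses Noether--Fano and all threshold bookkeeping. Suppose $e \ge 0$. The link is $X \xleftarrow{\varphi} Y \ratmap Y' \to X'$ with $Y \ratmap Y'$ small, so $K_{Y'} + \tfrac{1}{n}\mcM_{Y'} \sim_{\mbQ} e E'$ persists at the far end, where $E'$ is the strict transform of $E$. If the link has type I, so that $Y' = X' \to S'$ is the Mori fiber space, choose $\mcH'$ pulled back from $S'$; for a general curve $C$ in a fiber one gets $(K_{Y'} + \tfrac{1}{n}\mcM_{Y'})\cdot C = K_{Y'}\cdot C < 0$ while $e\,E'\cdot C \ge 0$, a contradiction. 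If it has type II, the final map is a divisorial contraction $\varphi' \colon Y' \to X'$ contracting a divisor $F \ne E'$ (were $F = E'$, the link would be small, hence biregular); taking $\mcH'$ very ample gives $\mcM_{Y'} = \varphi'^*\mcH'$, so $e E' \sim_{\mbQ} \varphi'^*\bigl(K_{X'} + \tfrac{1}{n}\mcH'\bigr) + a_{X'}(F)\,F$, and intersecting with a general $\varphi'$-contracted curve $c$ yields $e\,(E'\cdot c) = a_{X'}(F)\,(F\cdot c) < 0$ while $E'\cdot c \ge 0$, again a contradiction. Hence $e < 0$, which is the lemma. As written, your proposal leaves this essential step --- the only nontrivial one --- unproved, and the substitute claim it leans on is false in general.
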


%%%%%%%%%%%%%%%%%%%%%%%%%%%%%%%%%%
\subsection{Weighted projective spaces and rank $2$ toric varieties}
%%%%%%%%%%%%%%%%%%%%%%%%%%%%%%%%%%

Let
\[
\mbP := \mbP (a_0, \dots, a_n) = \Proj \mbC [x_0, \dots, x_n]
\]
be a weighted projective space with homogeneous coordinates $x_0, \dots, x_n$ of weights $a_0, \dots, a_n$, respectively.
As an ambient space of Fano $3$-fold WCIs, we always assume that the weighted projective space $\mbP$ is \textit{well-formed} that is, the greatest common divisor of any $n$ of $a_0, \dots, a_n$ is $1$.
For a coordinate $\xi \in \{x_0, \dots, x_n\}$, we denote by $\msp_{\xi} = (0\!:\!\cdots\!:\!0\!:\!1\!:\!0\!:\!\cdots\!:\!0) \in \mbP$ the point at which only the coordinate $\xi$ does not vanish. 
For a quasi-homogeneous polynomials $f_1, \dots, f_m \in \mbC [x_0, \dots, x_n]$, we define
\[
(f_1 = \cdots = f_m = 0) := \Proj \mbC [x_0, \dots, x_n]/(f_1, \dots, f_m).
\]
We sometimes put $\mbP$ as a subscript and denote $(f_1 = \cdots = f_m)_{\mbP}$ when we make explicit the ambient weighted projective space $\mbP$.

\begin{Def}
Let $V$ be a closed subscheme of $\mbP$ defined by a homogeneous ideal $I \subset \mbC [x_0, \dots, x_n]$.
The \textit{quasismooth locus} $\Qsm (V)$ of $V$ is defined to be the image of the smooth locus of $C_V^* := C_V \setminus \{o\}$ under the natural morphism $\mbA^{n+1} \setminus \{o\} \to \mbP$, where $C_V = \Spec \mbC [x_0, \dots, x_n]/I$ is the affine cone of $V$ and $o \in \mbA^{n+1}$ is the origin.
For a subset $S \subset V$, we say that $V$ is \textit{quasismooth along} $S$ if $S \subset \Qsm (V)$.
We say that $V$ is \textit{quasismooth} if $V = \Qsm (V)$.
\end{Def}

We sometimes denote $\mbP (a_x, b_y, c_z, d_t, \dots)$, where $a, b, c, d, \dots$ are positive integers, and this means that it is the weighted projective space with homogeneous coordinates $x, y, z, t, \dots$ of weights $a, b, c, d, \dots$, respectively.

We next recall the definition of rank $2$ toric varieties which will be useful when we construct various links.
Let $3 \leq m + 1 < n$ be positive integers and $a_1, \dots, a_n, b_1, \dots, b_n$ be integers.
Let $R = \mbC [x_1, \dots, x_n]$ be the polynomial ring with a $\mbZ^2$-grading defined by the $2 \times n$ matrix
\[
\begin{pmatrix}
a_1 & a_2  & \cdots & a_n \\
b_1 & b_2 & \cdots & b_n
\end{pmatrix}, 
\]
that is, the bi-degree of the variable $x_i$ is $(a_i, b_i) \in \mbZ^2$.
We denote by
\[
\mbT := \mbT \begin{pNiceArray}{ccc|ccc}[first-row]
x_1 & \cdots & x_m & x_{m+1} & \cdots & x_n \\
a_1 & \cdots & a_m & a_{m+1} & \cdots & a_n \\
b_1 & \cdots & b_m & b_{m+1} & \cdots & b_n
\end{pNiceArray}
\]
the toric variety whose Cox ring is $R$ and the irrelevant ideal is $I = (x_1, \dots, x_m) \cap (x_{m+1}, \dots, x_n)$.
It is the geometric quotient
\[
\mbT = (\mbA^n_{x_1, \dots, x_n} \setminus V (I))/(\mbC^*)^2,
\]
where the $(\mbC^*)^2$-action is given by
\[
(\lambda, \mu) \cdot (x_1, \dots, x_n) = (\lambda^{a_1} \mu^{b_1} x_1, \dots, \lambda^{a_n} \mu^{b_n} x_n).
\]
The variety $\mbT$ is a simplicial toric variety of Picard number $2$.
Let $\msp \in \mbT$ be a point and let $\msq = (\alpha_1, \dots, \alpha_n) \in \mbA^n$ be a preimage of $\msp$ by the morphism $\mbA^n \setminus V (I) \to \mbT$.
In this case we express $\msp$ as
\[
\msp = (\alpha_1\!:\!\cdots\!:\!\alpha_m \mid \alpha_{m+1}\!:\!\cdots\!:\!\alpha_n) \in \mbT.
\]
For polynomials $f_1, \dots, f_m$ which are quasi-homogeneous with respect to the above $(\mbC^*)^2$-action, we denote by
\[
(f_1 = \cdots = f_m = 0) \subset \mbT
\]
the closed subscheme which is the image of $(f_1 = \cdots = f_m = 0) \subset \mbA^n \setminus V (I)$.
We sometimes denote $(f_1 = \cdots = f_m = 0)_{\mbT}$ when we make explicit the ambient toric variety $\mbT$.

\begin{Rem}[{\cite[Remark~2.13]{OkSolid}}]
Let $\mbP = \mbP (a_0, \dots, a_n)$ be the weighted projective space with homogeneous coordinates $x_0, \dots, x_n$ of weights $a_0, \dots, a_n$, respectively.
Let $b_1, \dots, b_n$ be positive integers.
Then the morphism
\[
\Psi \colon \mbT := \mbT \begin{pNiceArray}{cc|cccc}[first-row]
u & x_0 & x_1 & x_2 & \dots & x_n \\
0 & a_0 & a_1 & a_2 & \dots & a_n \\
-a_0 & 0 & b_1 & b_2 & \dots & b_n
\end{pNiceArray} \to
\mbP,
\]
defined by
\[
(u\!:\!x_0 \, | \, x_1\!:\!\cdots\!:\!x_n) \mapsto (x_0\!:\!u^{b_1/a_0} x_1\!:\!u^{b_2/a_0} x_2\!:\!\cdots\!:\!u^{b_n/a_0} x_n)
\]
is the weighted blow-up of $\mbP$ at the point $\msp_{x_0}$ with $\wt (x_1, \dots, x_n) = \frac{1}{a_0} (b_1, \dots, b_n)$.
The $\Psi$-exceptional divisor is the divisor $(u = 0)_{\mbT}$ on $\mbT$.
\end{Rem}

%%%%%%%%%%%%%%%%%%%%%%%%%%%%%%%%%%
\subsection{Special divisors over a $3$-fold germ}
%%%%%%%%%%%%%%%%%%%%%%%%%%%%%%%%%%

Let $\msp \in X$ be the germ of an algebraic variety.
A \textit{divisor $E$ over $X$} is a prime divisor $E$ on a normal variety $Y$ admitting a birational morphism $Y \to X$.
We say that divisors $E_1$ and $E_2$ over $X$ are \textit{equivalent}, denoted by $E_1 \sim_{\mathrm{val}} E_2$, if their associated valuations coincide.
A \textit{divisor $E$ of discrepancy $a$ over a point $\msp \in X$} is a divisor $E$ over $X$ such that its center on $X$ is $\msp$ and $a_X (E) = a$. 
When we count the number of divisors over a germ $\msp \in X$, we always count them up to the equivalence relation $\sim_{\mathrm{val}}$. 

Let $a_1, \dots, a_n \ge 0$ and $r \geq 2$ be integers.
We consider the action of $\mbZ_r := \mbZ/r \mbZ$ on $\mbA^n_{x_1, \dots, x_n}$ by 
\[
(x_1, \dots, x_n) \mapsto (\zeta^{a_1} x_1, \dots, \zeta^{a_n} x_n),
\]
where $\zeta \in \mbC$ is a primitive $r$th root of unity.
For $\mbZ_r$-semi-invariant polynomials $f_1, \dots, f_m \in \mbC [x_1, \dots, x_n]$, the quotient by the $\mbZ_r$-action of the closed subscheme $\Spec \mbC [x_1, \dots, x_n]/(f_1, \dots, f_m)$ of $\mbA^n$ is denoted by
\[
(f_1 = \cdots = f_m = 0)/\mbZ_r ({a_1}_{x_1}, \dots, {a_n}_{x_n}).
\]

\begin{Lem} \label{lem:cA/2divsmin}
Let $\msp \in X$ be the germ of a $3$-fold terminal singularity of type $cA/2$ which is analytically equivalent to 
\[
\bar{o} \in (x y + g (z^2, t) = 0)/\mbZ_2 (1_x, 1_y, 1_z, 0_t),
\]
where  the weighted order of $g (z^2, t)$ with respect to $\wt (z, t) = (1, 2)$ is $6$ and $t^3 \in g (z^2, t)$.
Then, there are $3$ divisors of discrepancy $1/2$ over $\msp \in X$.
\end{Lem}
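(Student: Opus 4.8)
The plan is to identify the divisors of discrepancy $1/2$ over $\msp \in X$ with the exceptional divisors of three explicit weighted blow-ups and to prove that there are no others. I work in the analytic local coordinates of the statement, so that $X$ is cut out by $f = xy + g(z^2, t)$, and write $g = g_6 + (\text{higher order})$ with $g_6 = c_0 t^3 + c_1 z^2 t^2 + c_2 z^4 t + c_3 z^6$, where $c_0 \neq 0$, every monomial of $g$ has the shape $z^{2a} t^b$, and ``higher order'' refers to weighted order $> 6$ for $\wt(z,t) = (1,2)$. Since $1/2$ is the minimal discrepancy of a $cA/2$ point, it is known (by the classification of minimal discrepancy divisors over such points) that any divisor $E$ over $\msp \in X$ with $a_X(E) = 1/2$ is the exceptional divisor of a weighted blow-up; concretely it corresponds to a primitive vector $v = (v_1, v_2, v_3, v_4)$ with positive entries in the lattice $N = \mbZ^4 + \mbZ \cdot \tfrac{1}{2}(1,1,1,0)$ whose center on $X$ is the point $\msp$, and then
\[
a_X(E) = (v_1 + v_2 + v_3 + v_4 - 1) - \wt_v(f).
\]

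The main computation rests on one observation: on $X$ the relation $xy = -g$ holds identically, so $v(x) + v(y) = v(g)$ for the associated valuation, which forces $v_1 + v_2 = \wt_v(g) = \wt_v(f)$. Substituting into the discrepancy formula gives $v_3 + v_4 - 1 = 1/2$, i.e. $v_3 + v_4 = 3/2$. Because the center of $E$ is the point $\msp$ we must have $v_4 > 0$; as $v_4 \in \mbZ$ and $v_3 \in \tfrac{1}{2} + \mbZ_{\geq 0}$ by the structure of $N$, the only possibility is $v_3 = 1/2$ and $v_4 = 1$. For these weights each degree-$6$ monomial $z^{2a} t^b$ (with $a + b = 3$) has $\wt_v$ equal to $a + b = 3$, while every higher order monomial has $\wt_v = a + b \geq 4$; hence $\wt_v(g) = 3$ and so $v_1 + v_2 = 3$ with $v_1, v_2 \in \tfrac{1}{2} + \mbZ_{\geq 0}$. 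The only solutions are
\[
(v_1, v_2) \in \{(\tfrac{1}{2}, \tfrac{5}{2}),\ (\tfrac{3}{2}, \tfrac{3}{2}),\ (\tfrac{5}{2}, \tfrac{1}{2})\},
\]
which yields exactly three candidates.

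It then remains to confirm that all three candidates are genuine and pairwise inequivalent. In each case the leading form is $f_{v=3} = xy + g_6(z^2, t)$; since $g_6 \neq 0$, the equation $xy = -g_6$ defines an irreducible hypersurface in the weighted exceptional divisor, so the exceptional divisor $E$ is prime and the formula above indeed gives $a_X(E) = 1/2$. The three valuations are distinguished by the value they take on the invariant function $x^2$, namely $\wt_v(x^2) = 2 v_1 \in \{1, 3, 5\}$, so they are pairwise inequivalent under $\sim_{\mathrm{val}}$. This produces exactly $3$ divisors of discrepancy $1/2$.

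The step I expect to be the main obstacle is the reduction in the first paragraph: justifying that every divisor of discrepancy $1/2$ over $\msp \in X$ is one of these weighted blow-ups, rather than a more exotic valuation. The identity $v(x) + v(y) = v(g)$ is what makes this tractable, since it rules out the degenerate extractions whose leading form is the reducible $xy$ (these would force $v_1 + v_2 < \wt_v(g)$, contradicting the relation on $X$); once the non-toric and reducible possibilities are excluded, the finite enumeration above closes the count.
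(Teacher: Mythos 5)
Your enumeration is correct and arrives at exactly the right answer, but by a genuinely different route than the paper. You classify all candidate valuations at once: assuming every divisor of discrepancy $\frac{1}{2}$ is a weighted blow-up with weights $v$ in $N = \mbZ^4 + \mbZ\cdot\tfrac{1}{2}(1,1,1,0)$, the discrepancy formula plus the relation $xy = -g$ on $X$ force $v_3 + v_4 = \tfrac{3}{2}$ and $v_1 + v_2 = \wt_v(g) = 3$, leaving the three vectors $(\tfrac{1}{2},\tfrac{5}{2},\tfrac{1}{2},1)$, $(\tfrac{3}{2},\tfrac{3}{2},\tfrac{1}{2},1)$, $(\tfrac{5}{2},\tfrac{1}{2},\tfrac{1}{2},1)$. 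The paper instead argues in two stages without any global classification: it performs the single weighted blow-up $\varphi \colon Y \to X$ with $\wt(x,y,z,t) = \tfrac{1}{2}(5,1,1,2)$ (your third vector), shows any further divisor of discrepancy $\tfrac{1}{2}$ must have center at the unique non-Gorenstein point $\msq \in Y$, of type $\tfrac{1}{5}(1,2,3)$, with $a_Y(F) < 1$, and then invokes Reid's elementary list (\cite[(5.7)]{Reid87}) of the four toric divisors of discrepancy $< 1$ over that quotient point, computing $a_X(F_i)$ to see that exactly $F_1, F_2$ survive. One can check that $F_1, F_2$ are precisely your monomial valuations $(\tfrac{3}{2},\tfrac{3}{2},\tfrac{1}{2},1)$ and $(\tfrac{1}{2},\tfrac{5}{2},\tfrac{1}{2},1)$, so the two arguments find the same divisors. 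Your approach is shorter and treats the three divisors symmetrically; the paper's is self-contained (it only needs the classification of low-discrepancy divisors over terminal cyclic quotient points, which is elementary and toric) and moreover produces the chart data at $\msq \in Y$ that the paper reuses in the proof of Lemma~\ref{lem:cE6discmin}.

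The load-bearing step in your version is the reduction you yourself flag: that every divisor with $a_X(E) = \tfrac{1}{2}$ over a $cA/2$ point is a weighted blow-up in the given coordinates. This is a genuine theorem of Hayakawa (\emph{Blowing ups of $3$-dimensional terminal singularities}, Publ.\ RIMS \textbf{35} (1999)), which for $cA/r$ points yields exactly weighted blow-ups of the shape you enumerate, and you should cite it explicitly; as written, your closing heuristic does not close the gap, because the identity $v(x) + v(y) = v(g)$ only constrains valuations already assumed monomial and says nothing about non-toric valuations. Two smaller points of precision. First, from $xy = -g$ alone you only get $v_1 + v_2 = v_E(g) \ge \wt_v(g)$; the reverse inequality is supplied by primality of the exceptional divisor, since if $v_1 + v_2 > \wt_v(g)$ the leading form would be $g_{\wt_v}(z^2,t)$, which (being a weighted-homogeneous binary form with $t^3$-term) is a product of factors $t - \alpha z^2$ up to scalar and hence reducible. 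Second, ``$v_3 \in \tfrac{1}{2} + \mbZ_{\ge 0}$ by the structure of $N$'' is not quite right, as $N$ also contains integral vectors; those are excluded here only because $v_3 + v_4 = \tfrac{3}{2} \notin \mbZ$, which is worth a sentence. With the citation and these two repairs, your proof is complete.
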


\begin{proof}
Let $\varphi \colon Y \to X$ be the weighted blowup of $\msp \in X$ with weights $\wt (x, y, z, t) = \frac{1}{2} (5, 1, 1, 2)$.
The exceptional locus of $\varphi$ is isomorphic to the hypersurface
\[
(x y + g_{\wt = 6} (z^2, t) = 0) \subset \mbP (5_x, 1_y, 1_z, 2_t),
\]
where $g_{\wt = 6} (z^2, t)$ is the weighted order $6$ terms in $g (z^2, t)$ with respect to $\wt (z, t) = (1, 2)$.
The polynomial $x y + g_{\wt = 6} (z^2, t)$ is irreducible since $t^3 \in g_{\wt = 6}$.
We denote this exceptional divisor by $E$.
Then, $E$ is a divisor of discrepancy $1/2$ over $\msp \in X$.

We set $\msq := (1\!:\!0\!:\!0\!:\!0) \in E$, which is the unique point of $Y$ at which $E$ is not Cartier since $t^3 \in g_{\wt = 6} (z^2, t)$.
Let $F$ be a divisor of discrepancy $1/2$ over $\msp \in X$ other than $E$.
Then, its center $C_Y (F)$ on $Y$ is a proper closed subvariety of $E$ and we have
\[
\frac{1}{2} = a_X (F) = a_Y (F) + \frac{1}{2} \ord_F (E).
\]
Hence $C_Y (F)$ must be a non-Gorenstein singular point on $E$ at which $E$ is not Cartier, that is, $C_Y (F)$ is the point $\msq$.
Moreover, we must have $a_Y (F) < 1$.

We work on the $x$-chart $U_x \subset Y$ of the weighted blowup:
\[
U_x = (\tilde{y} + \tilde{g} (\tilde{x}, \tilde{z}, \tilde{t}^2)/\tilde{x}^3 = 0)/\mbZ_5  (3_{\tilde{x}}, 1_{\tilde{y}}, 2_{\tilde{z}}, 1_{\tilde{t}}),
\]
where
\[
\tilde{g} (\tilde{x}, \tilde{z}, \tilde{t}^2) := g (\tilde{z} \tilde{x}, \tilde{t}^2 \tilde{x})/\tilde{x}^3.
\]
The exceptional divisor $E \cap U_x$ is defined by $\tilde{x}$ on this chart.
The point $\msq$ corresponds to the origin of $U_x$ and the singularity $\msq \in Y$ is of type $\frac{1}{5} (1, 2, 3)$.
We can choose $\tilde{x}, \tilde{z}, \tilde{t}$ as local orbifold coordinates of $U_x$ at $\msq$.
For $i = 1, 2, 3, 4$, let $\psi_i \colon W_i \to Y$ be the weighted blowup of $\msq \in Y$ with weights $\wt (\tilde{x}, \tilde{z}, \tilde{t}) = ([3i], [2i], i)$, where $0 \le [m] < 5$ denotes the integer which is congruent to $m$ modulo $5$, and let $F_i$ be the exceptional divisor of $\psi_i$.
We have $a_Y (F_i) = i/5$ for $1 \le i \le 4$ and the divisors $F_1, \dots, F_4$ are exactly the divisors of discrepancy less than $1$ with center $\msq \in Y$ (see \cite[(5.7)]{Reid87}).
We compute
\[
a_X (F_i) = a_Y (F_i) + \frac{1}{2} \ord_{F_i} \psi_i^*E = \frac{i}{5} + \frac{1}{2} \cdot [3i] =
\begin{cases}
\frac{1}{2}, & \text{for $i = 1, 2$}, \\
1, & \text{for $i = 3, 4$}. 
\end{cases}
\]
This shows that $E, F_1$ and $F_2$ are the exceptional divisors discrepancy $1/2$ over $\msp \in X$.
\end{proof}

\begin{Lem} \label{lem:cE6discmin}
Let $\msp \in X$ be the germ at origin of the hypersurface
\[
(f (x, y, z, t) = 0) \subset \mbA^4,
\] 
where $f (x, y, z, t) \in \mbC [x, y, z, t]$.
We assume that $\msp \in X$ is an isolated singularity and
\[
f = x^2 + x z (\lambda t + g_2 (y, z^2)) + g_6 (y, z^2) +  x (t^2 +  h (x, y, z^2)),
\]
where $\lambda \in \mbC$ and $g_3 (y, z^2), g_6 (y, z^2, t), h (x, y, z^2)$ are polynomials satisfying the following properties.
\begin{enumerate}
\item[(a)] The weighted hypersurface
\[
(x^2  + x z (\lambda t + g_2 (y, z^2)) + g_6 (y, z^2) = 0) \subset \mbP (3_x, 2_y, 1_z, 2_t)
\]
is quasismooth outside the point $(0\!:\!0\!:\!0\!:\!1)$.
\item[(b)] For $i = 2, 6$, $g_i (y, z^2)$ a quasi-homogeneous polynomial of degree $i$ with respect to weights $\wt (y, z) = (2, 1)$.
\item[(c)] The polynomial $h (x, y, z^2)$ has weighted order at least $4$ with respect to weights $\wt (x, y, z) = (3, 2, 1)$.
\end{enumerate}
Then $\msp \in X$ is a terminal singularity of type $cE_6$ and the following assertions hold.
\begin{enumerate}
\item If $\lambda \ne 0$, then there are $4$ divisors of discrepancy $1$ over $\msp \in X$.
\item If $\lambda = 0$, then there are $3$ divisors of discrepancy $1$ over $\msp \in X$.
\item There is no divisorial contraction with center $\msp \in X$ of discrepancy greater than $1$.
\end{enumerate}
\end{Lem}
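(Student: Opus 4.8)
The plan is to use a single weighted blowup to pin down one discrepancy $1$ divisor, reduce the remaining search to a $cA/2$-germ governed by Lemma~\ref{lem:cA/2divsmin}, and then read off the count (and the $\lambda$-dependence) from one order computation. First I would identify the singularity: completing the square in $x$ rewrites $f$ as $\bar{x}^2 + g_6(y,z^2) - \tfrac14(z(\lambda t + g_2) + t^2 + h)^2$, whose general hyperplane slice is $\bar{x}^2 + y^3 + t^4 + \cdots$, an $E_6$ Du Val singularity; hence $\msp \in X$ is a terminal (Gorenstein) $cE_6$ point. For the divisor count I introduce the weighted blowup $\varphi\colon Y \to X$ with $\wt(x,y,z,t)=(3,2,1,2)$. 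The weighted order of $f$ is $6$, with $f_{\wt=6}$ equal to the polynomial in (a), so the exceptional divisor $E$ has $a_X(E) = (3+2+1+2) - 1 - 6 = 1$ and is isomorphic to the weighted hypersurface of (a).

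Next I localize the problem. Quasismoothness in (a) at $(0\!:\!1\!:\!0\!:\!0)$ forces $y^3 \in g_6$, so $E$ meets $\Sing \mbP(3,2,1,2)$ only at $\msp_t := (0\!:\!0\!:\!0\!:\!1)$; thus $Y$ is smooth along $E \setminus \{\msp_t\}$. Using $a_X(F) = a_Y(F) + \ord_F(E)$ for a divisor $F \ne E$ over $\msp$ (whose center lies in $E$), a center in the smooth locus forces $a_X(F) \ge 2$, so every discrepancy $1$ divisor other than $E$ has center $\msp_t$. I then compute in the $t$-chart of $\varphi$, where the strict transform becomes
\[
\xi^2 + \xi\zeta(\lambda + g_2(\eta,\zeta^2)) + \xi\tau(1+\cdots) + g_6(\eta,\zeta^2) + O(\tau^2) = 0
\]
inside $\mbA^4/\mbZ_2(1,0,1,1)$, with $E = (\tau = 0)$. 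After the change of variable $w = \xi + \zeta(\lambda + g_2(\eta,\zeta^2)) + \tau(1+\cdots)$ this is $\xi w + g_6(\eta,\zeta^2) + \cdots = 0$, a $cA/2$-germ in the normal form of Lemma~\ref{lem:cA/2divsmin} (with $(x,y,z,t) = (\xi,w,\zeta,\eta)$ and $t^3 = \eta^3 \in g_6$). Hence $\msp_t \in Y$ is $cA/2$ and carries exactly three divisors of discrepancy $1/2$, which one checks are the weighted blowups $v_{\alpha,\beta}$ with $\wt(\xi,w,\zeta,\eta) = \tfrac12(\alpha,\beta,1,2)$ and $(\alpha,\beta) \in \{(1,5),(3,3),(5,1)\}$.

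Now I glue the two blowups. Since $cA/2$ is terminal with minimal discrepancy $1/2$ and $E$ is $\mbQ$-Cartier of index $2$ at $\msp_t$, the relation $a_X(F) = a_Y(F) + \ord_F(E) = 1$ forces $a_Y(F) = 1/2$ and $\ord_F(E) = 1/2$; so $F$ must be one of $v_{1,5},v_{3,3},v_{5,1}$ with in addition $\ord_F(E)=\tfrac12$. In the $cA/2$-coordinates $E$ is cut out by $\tau = y - x - \lambda z - z\,g_2(t,z^2)$. A direct weight computation gives $\ord_{v_{1,5}}(E) = \ord_{v_{5,1}}(E) = \tfrac12$ for every $\lambda$, while for $v_{3,3}$ (weights $\tfrac12(3,3,1,2)$) one finds $\ord_{v_{3,3}}(E) = \tfrac12$ when $\lambda \ne 0$ (the term $\lambda z$ dominates) but $\ord_{v_{3,3}}(E) = \tfrac32$ when $\lambda = 0$ (the surviving lowest-weight part $y - x - z\,g_2(t,z^2)$ has weight $\tfrac32$). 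Therefore the discrepancy $1$ divisors over $\msp$ are $E,v_{1,5},v_{3,3},v_{5,1}$ if $\lambda \ne 0$ and $E,v_{1,5},v_{5,1}$ if $\lambda = 0$, which proves (1) and (2). I expect this single order computation, with the $\lambda$-sensitivity isolated in $v_{3,3}$, to be the technical heart, together with checking that the change of variable genuinely places $\msp_t \in Y$ in the normal form of Lemma~\ref{lem:cA/2divsmin}.

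Finally, for (3) I would observe that a divisorial contraction centered at $\msp$ of discrepancy $a \ge 2$ extracts a divisor $F$ with $a_X(F) = a$ whose center on $Y$ lies in $E$; by the analysis above such $F$ is ``dominated'' by the discrepancy $1$ divisor $E$ over the same center, and invoking the classification of divisorial contractions to three-dimensional terminal Gorenstein ($cDV$) points---by which a contraction to a $cE_6$ point has discrepancy $1$---rules out $a \ge 2$. This last step is the one most dependent on external input, and I would present it through that classification rather than re-deriving it.
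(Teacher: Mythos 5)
Your treatment of the singularity identification and of parts (1) and (2) is essentially the paper's own proof. You use the same weighted blowup $\varphi$ with $\wt (x, y, z, t) = (3, 2, 1, 2)$, the same localization argument showing that any further discrepancy-$1$ divisor must be centered at the unique singular point $\msr$ of $Y$ on $E$ (forced by $a_X (F) = a_Y (F) + \ord_F (E) = 1$ with both summands in $\tfrac{1}{2} \mbZ_{>0}$), the same hyperbolic change of variable turning the $t$-chart into the $cA/2$ germ $\xi w + g_6 = 0$ in the normal form of Lemma~\ref{lem:cA/2divsmin}, and the same three half-discrepancy weighted blowups (your $v_{1,5}, v_{3,3}, v_{5,1}$ are the paper's $\psi_1, \psi_3, \psi_5$), with the identical $\lambda$-sensitive order computation via $t (1 + \tilde{h}) = s - (x + \lambda z + z g_2)$ giving $\ord (E) = \tfrac{1}{2}$ versus $\tfrac{3}{2}$ for the middle divisor. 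The points you leave to "one checks" (primality of the three exceptional divisors, which the paper verifies by writing them out explicitly, and the Du Val $E_6$ slice, which the paper gets by cutting with a general $t = \alpha x + \beta y + \gamma z$ and citing Paemurru) are routine.

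The genuine gap is in your part (3). The classification fact you invoke --- that a divisorial contraction centered at a $cDV$ (in particular $cE_6$) point must have discrepancy $1$ --- is false. Kawakita's classification produces divisorial contractions of arbitrarily large discrepancy to $cA_1$ points, higher-discrepancy contractions to $cA_n$ points, and, in the supplement to the classification, contractions of discrepancy greater than $1$ to suitable $cD$ and $cE$ points. This is exactly why assertion (3) is a nontrivial statement about this particular germ rather than a consequence of the singularity type, and why the paper proves it by verifying the hypotheses of a fine-grained criterion, \cite[Proposition~3.16]{OkSolid}: for $\lambda \ne 0$ it combines that proposition with the count of discrepancy-$1$ divisors already established, and for $\lambda = 0$ it first computes the $cE_6$ normal form $x^2 + y^3 + y \phi (z, t) + h (z, t) = 0$ and checks that the degree-$4$ part of $h$ is $t^4$ before applying the proposition. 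Your "domination" sentence cannot substitute for this: knowing all divisors of discrepancy $1$ does not by itself bound the discrepancy of a divisorial contraction, since the divisor extracted by a hypothetical higher-discrepancy contraction need not be related to them. As written, your step (3) would fail; you need either the normal-form criterion from \cite{OkSolid} or a direct comparison with Kawakita's actual classification of higher-discrepancy contractions to $cE$ points.
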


\begin{proof}
Let $\msp \in S$ be a general hyperplane section of $\msp \in X$ cut by the equation $t = \alpha x + \beta y + \gamma z$ for general $\alpha, \beta, \gamma \in \mbC$.
Then the least weight term of $g := f (x, y, z, \alpha x + \beta y + \gamma z)$ with respect to the weights $\wt (x, y, z) = (6, 4, 3)$ is 
\[
x^2 + \lambda \gamma x z^2 + g_6 (y, 0) + \gamma^2 x z^2.
\] 
The assumption (a) in particular implies that $y^3 \in g_6$.
By a suitable coordinate change, we see that $\msp \in S$ is isomorphic to the germ at origin of the hypersurface defined by
\[
x^2 + y^3 + z^4 = 0.
\]
By \cite[Corollary 4.7]{Pae24}, $\msp \in S$ is a Du Val singularity of type $E_6$, and hence $\msp \in X$ is a terminal singularity of type $cE_6$.

Let $\varphi \colon Y \to X$ be the weighted blowup of $\msp \in X$ with weights $\wt (x, y, z, t) = (3, 2, 1, 2)$ and let $E$ be its exceptional divisor.
We have an isomorphism
\[
E \cong (x^2 + x z (\lambda t + g_2 (y, z^2)) + g_6 (y, z^2) = 0) \subset \mbP (3_x, 2_y, 1_z, 2_t).
\]
Let $\msr \in E$ be the point corresponding to $(0\!:\!0\!:\!0\!:\!1)$.
By the assumption (a), the point $\msr$ is the unique singular point of $Y$ along $E$.
We have $a_X (E) = 1$.

We work on the $t$-chart $U_t \subset Y$ of the weighted blowup $\varphi$:
\[
U_t \cong (\tilde{f} (x, y, z, t) = 0)/\mbZ_2 (1_x, 0_y, 1_z, 1_t),
\]
where
\[
\begin{split}
\tilde{f} (x, y, z, t) &:= f (x t^3, y t^2, z t, t^2)/t^6 \\
&= x^2 + x z (\lambda + g_2 (y, z^2)) + g_6 (y, z^2) + x t + x t \tilde{h} (x, y, z, t)
\end{split}
\]
with
\[
\tilde{h} (x, y, z, t) := h (x t^3, y t^2, z^2 t^2)/t^4 \in (x, y, z).
\]
Filtering off terms divisible by $x$ in the equation $\tilde{f} = 0$ and introducing a new variable $s$, we obtain a re-embedding of $U_t$:
 \begin{equation} \label{eq:Utcodim2}
 U_t \cong 
\left(
\begin{tabular}{l}
$x s + g_6 (y, z^2) = 0$, \\
$s - (x + \lambda z +  z g_2 (y, z^2) + t + t \tilde{h}) = 0$
\end{tabular}
\right)/\mbZ_2 (1_x, 0_y, 1_z, 1_t, 1_s).
\end{equation}

If we think of $\msp \in X$ as an analytic germ, then we can eliminate the variable $t$ since $\tilde{h} \in (x, y, z)$.
It follows that the analytic germ $\msp \in X$ is equivalent to the germ at origin of the hyperquotient
\[
(x s + g_6 (y, z^2) = 0)/\mbZ_2 (1_x, 1_s, 0_y, 1_z),
\]
and hence the singularity $\msp \in X$ is a terminal singularity of type $cA/2$.

Let $F$ be a divisor of discrepancy $1$ over $\msp \in X$ other than $E$.
Then the center $C_Y (F)$ of $F$ in $Y$ is a proper closed subvariety of $E$.
We have $a_F (X) = a_F (Y) + \ord_F (E) = 1$, and hence $0 < a_F (Y) < 1$ and $0 < a_F (E) < 1$ since $Y$ has only terminal singularities.
It follows that $C_Y (F)$ is a non-Gorenstein singular point of $Y$ at which $E$ is not Cartier, that is, $C_Y (F) = \msr$.
The discrepancy of a divisor over $\msr \in Y$ is a positive half integer since $2 K_Y$ is Cartier.
It follows that $F$ must be a divisor of discrepancy $1/2$ over $\msr \in Y$.
By Lemma \ref{lem:cA/2divsmin}, there are $3$ distinct divisors of discrepancy $1/2$ over $\msr \in Y$.
It remains to check if each one of them is a divisor of discrepancy $1$ over $\msp \in X$. 

The point $\msr$ corresponds to the origin of $U_t$ and $E|_{U_t}$ is defined by $t$. 
For $i = 1, 3, 5$, let $\psi_i \colon W_i \to Y$ be the weighted blowup of $\msr \in Y$ with weights $\wt (x, y, z, t, s) = (i, 2, 1 ,1 , 5-i)$ and let $F_i$ be its exceptional divisor.
We have isomorphisms
\[
\begin{split}
F_1 &= (x s + g_6 (y, z^2) = t + \lambda z + x = 0)  \subset \mbP (1_x, 2_y, 1_z, 1_t, 5_s), \\
F_3 &= (x s + g_6 (y, z^2) = t + \lambda z = 0) \subset \mbP (3_x, 2_y, 1_z, 1_t, 3_s), \\
F_5 &= (x s + g_6 (y, z^2) = s - (t + \lambda z) = 0) \subset \mbP (5_x, 2_y, 1_z, 1_t, 1_s).
\end{split}
\]
This in particular shows that $F_i$ is a prime divisor over $\msr \in Y$.
We have $a_Y (F_i) = 1/2$ for $i = 1, 3, 5$ and hence $F_1, F_3, F_5$ are the divisors of discrepancy $1/2$ over $\msr \in Y$.

It is easy to see that $\ord_{F_i} (\psi_i^*E) = 1/2$ if $i = 1, 5$, and hence
\[
a_X (F_i) = a_Y (F_i) + \ord_{F_i} (\psi_i^* E) = \frac{1}{2} + \frac{1}{2} = 1
\]
for $i = 1, 5$.
We compute $a_X (F_3)$.
By the equations \eqref{eq:Utcodim2}, the section $t$ can be written as
\[
t (1 + \tilde{h}) = s - (x + \lambda z + z g_2 (y, z^2)),
\]
and $1 + \tilde{h}$ does not vanish at $\msr$ since $\tilde{h} \in (x, y, z)$.
It follows that
\[
\ord_{F_3} (\psi_3^*E) =
\begin{cases}
1/2, & \text{if $\lambda \ne 0$}, \\
3/2, & \text{if $\lambda = 0$}.
\end{cases}
\]
Thus, we have
\[
a_X (F_3) = a_Y (F_3) + \ord_{F_3} (\psi^*_3 E) = 
\begin{cases}
1, & \text{if $\lambda \ne 0$}, \\
2, & \text{if $\lambda = 0$}.
\end{cases}
\]
Therefore, $E, F_1, F_3$ and $F_5$ (resp.\ $E, F_1$ and $F_5$) are the divisors of discrepancy $1$ over $\msp \in X$ if $\lambda \ne 0$ (resp.\ $\lambda = 0$).

It remains to prove (3).
If $\lambda \ne 0$, then the assertions follows from (2) and \cite[Proposition~3.16]{OkSolid}.
Suppose that $\lambda = 0$.
Then the singularity $\hat{\msq} \in \hat{X}$ is equivalent to the germ defined by a normal form
\[
x^2 + y^3 + y \phi (z, t) + h (z, t) = 0,
\]
where $\phi (z, t), h (z, t) \in \mbC \{z, t\}$ are convergent power series of order at least $3, 4$, respectively.
The degree $4$ part of $h (z, t)$ is $t^4$.
The assertion then follows from \cite[Proposition~3.16]{OkSolid}.
\end{proof}

%%%%%%%%%%%%%%%%%%%%%%%%%%%%%%%%%%
%%%%%%%%%%%%%%%%%%%%%%%%%%%%%%%%%%
\section{Exclusion for $X_7 \subset \mbP (1, 1, 1, 2, 3)$}
\label{sec:hypdeg7}
%%%%%%%%%%%%%%%%%%%%%%%%%%%%%%%%%%
%%%%%%%%%%%%%%%%%%%%%%%%%%%%%%%%%%

Let $X = X_7 \subset \mbP (1, 1, 1, 2, 3)$ be a Fano $3$-fold weighted hypersurface defined by a quasi-homogeneous polynomial $\msF = \msF (x, y, z, t, w)$ of degree $7$, where $x, y, z, t, w$ are homogeneous coordinates of weights $1, 1, 1, 2, 3$, respectively.
%We exclude the $\frac{1}{3} (1, 1, 2)$ point and some divisorial contractions to Gorenstein singular point as a maximal center and extraction, respectively.

\begin{Lem} \label{lem:excl1/3}
Suppose that $X$ is quasismooth at $\msp_w$ and let
\[
\msF = w^2 \ell (x, y, z) + w f_4 + f_7 = 0
\]
be the defining equation of $X$, where $\ell (x, y, z)$ is a nonzero linear form and $f_i (x, y, z, t)$ is a quasi-homogeneous polynomial of degree $i$. 
Let $\varphi \colon Y \to X$ be the Kawamata blowup of $\msp_w \in X$.
Then the following hold.
\begin{enumerate}
\item If $\ell \nmid f_4$, then there is a birational involution $\iota \colon X \ratmap X$ which is an elementary self-link initiated by $\varphi$.
\item If $\ell \mid f_4$, then  $\varphi$ is not a maximal extraction.
\end{enumerate}
\end{Lem}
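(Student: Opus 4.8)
The plan is to run the Sarkisov $2$-ray game on the Kawamata blowup $\varphi\colon Y\to X$ after realizing it inside a rank $2$ toric variety, and to read the dichotomy off the geometry of the second extremal contraction. Following the Remark, I realize $\varphi$ as the restriction of the weighted blowup $\Psi\colon\mbT\to\mbP(1,1,1,2,3)$ of $\msp_w$, where $\mbT$ has Cox coordinates $u,w,x,y,z,t$ and $\wt(x,y,z,t)=\tfrac13(1,1,1,2)$. Substituting into $\msF$ and dividing out the common factor, the strict transform is $Y=(\ell w^2+u f_4 w+u^2 f_7=0)\subset\mbT$, a member of $|\mcO_{\mbT}(7,1)|$ with exceptional divisor $E=(u=0)|_Y$. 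A direct computation gives $-K_Y=\varphi^*A-\tfrac13E=\mcO_{\mbT}(1,1)|_Y=\Phi^*\mcO_{\mbP(1,1,1,2)}(1)$, where $\Phi\colon\mbT\to\mbP(1,1,1,2)$ is the contraction of the other toric wall; thus $-K_Y$ is nef and the second half of the game is governed by the generically $2:1$ morphism $\Phi|_Y\colon Y\to\mbP(1,1,1,2)$.

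Next I identify the anticanonical model. Setting $\xi:=\ell w$ (of degree $4$) and using $\msF=0$ to replace $\ell w^2$, the anticanonical ring of $Y$ is generated by $x,y,z,t,\xi$ subject to the single relation $\xi^2+f_4\xi+\ell f_7=0$, so the anticanonical model is $\bar Y=X_8\subset\mbP(1,1,1,2,4)$, the double cover of $\mbP(1,1,1,2)$ branched along $(f_4^2-4\ell f_7=0)$, with deck involution $\iota_{\bar Y}\colon\xi\mapsto-\xi-f_4$ and $(-K_{\bar Y})^3=(-K_Y)^3=1$. The crepant morphism $\theta\colon Y\to\bar Y$ contracts exactly the curves lying in the positive-dimensional fibres of $\Phi|_Y$, i.e. the full $\mbP^1_{u,w}$-fibres over $Z:=(\ell=f_4=f_7=0)\subset\mbP(1,1,1,2)$. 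The entire dichotomy is dictated by $\dim Z$: since $f_4|_{(\ell=0)}\equiv0$ precisely when $\ell\mid f_4$, the case $\ell\mid f_4$ forces $Z=(\ell=f_7=0)$ to be a curve, while $\ell\nmid f_4$ makes $Z$ finite (here I use quasismoothness of $X$ to exclude the degenerate possibility that $f_7$ vanishes on the whole curve $(\ell=f_4=0)$).

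For (1), when $\ell\nmid f_4$ the morphism $\theta$ is small, so crossing the $-K_Y$ wall is a flop $Y\dashrightarrow Y^+$. The deck involution $\iota_{\bar Y}$ interchanges the two small resolutions of $\bar Y$, hence lifts to this flop and to a second Kawamata blowup $\varphi^+\colon Y^+\to X$ of $\msp_w$; the resulting elementary self-link $X\xleftarrow{\varphi}Y\dashrightarrow Y^+\xrightarrow{\varphi^+}X$ realizes the birational involution $\iota\colon X\ratmap X$, $w\mapsto -w-f_4/\ell$. Because $\ell\nmid f_4$ the ratio $f_4/\ell$ is not a polynomial, so $\iota$ is genuinely birational rather than biregular, which is what makes the link nontrivial. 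I expect the main difficulty to lie here: verifying that $\theta$ is small with $\bar Y$ carrying only terminal (ordinary double point type) singularities along the contracted locus, so that the flop exists and both ends of the game are isomorphic to $X$.

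For (2), when $\ell\mid f_4$ I first complete the square via the coordinate change $w\mapsto w-\tfrac12 f_4/\ell$ (legitimate since $f_4/\ell$ has degree $3=\wt(w)$ and fixes $\msp_w$), reducing to $f_4=0$. Then $Z=(\ell=f_7=0)$ is a curve and $\theta\colon Y\to\bar Y$ contracts the divisor $G=\Phi|_Y^{-1}(Z)$ onto $Z$, so the second wall of the nef cone is again divisorial. Consequently both walls of $\bMov(Y)=\Nef(Y)$ are divisorial contractions, whence $\bMov(Y)=\operatorname{Cone}(\varphi^*A,\,-K_Y)$. For any movable $\mcM\sim_{\mbQ}nA$ with $m:=\ord_E(\mcM)$, the strict transform $\mcM_Y=n\varphi^*A-mE$ lies in $\bMov(Y)$; writing $\mcM_Y=\alpha\,\varphi^*A+\beta(-K_Y)$ with $\alpha,\beta\ge0$ and using $E=3\varphi^*A-3(-K_Y)$ yields $\beta=3m$ and $\alpha=n-3m\ge0$, i.e. $\ord_E(\mcM)\le n/3=n\,a_X(E)$. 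Hence $\varphi$ is not a maximal extraction. The only delicate point here is confirming that $\theta$ is divisorial, which is immediate from the explicit description of $G$.
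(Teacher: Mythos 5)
Your route is, at bottom, the paper's route: realizing the Kawamata blowup inside the rank $2$ toric variety and playing the $2$-ray game is just an explicit form of the paper's projection $X \ratmap \mbP(1,1,1,2)$ from $\msp_w$, and your anticanonical model $(\xi^2 + f_4 \xi + \ell f_7 = 0) \subset \mbP(1,1,1,2,4)$ with $\xi = \ell w$ is exactly the double cover $Z$ the paper obtains by Stein factorization. Where you go beyond the paper is in unpacking the black-box citation \cite[Lemma 3.2]{OkII}: your case (2) argument ($\theta$ divisorial, hence $\bMov(Y)=\Nef(Y)=\operatorname{Cone}(\varphi^*A,-K_Y)$, hence $\ord_E(\mcM)\le n/3 = n\,a_X(E)$ for every movable $\mcM\sim_{\mbQ}nA$) and your case (1) argument (small contraction, flop, deck involution lifting to give the self-link $w\mapsto -w - f_4/\ell$) are both correct and are precisely the content of that cited lemma. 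Your direction of the dichotomy ($\theta$ divisorial exactly when $\ell\mid f_4$) is also the mathematically correct one; the paper's sentence ``$\Delta$ is a divisor if and only if $\ell \nmid f_4$'' is evidently a typo for $\ell \mid f_4$, as the subsequent appeal to \cite[Lemma 3.2]{OkII} only makes sense with your orientation.

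There is, however, one genuine flaw. In case (1) you need $(\ell = f_4 = f_7 = 0)\subset\mbP(1,1,1,2)$ to be finite, i.e.\ that $f_7$ does not vanish identically on any component $C$ of the curve $(\ell = f_4 = 0)$, and you justify this by ``quasismoothness of $X$.'' That hypothesis is not available: the lemma assumes quasismoothness only at $\msp_w$, and in the paper's application the lemma is applied to $\hat{X}$, which carries a $cE_6$ point and is not quasismooth. Moreover, even global quasismoothness would not obviously exclude the degeneration: along the cone over $C$ one has $\prt \msF/\prt w = 2w\ell + f_4 = 0$, but the remaining partials can well be nonzero there, so no contradiction with quasismoothness arises. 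The correct mechanism --- and the one the paper invokes --- is $\mbQ$-factoriality together with $\Cl(X)\cong\mbZ\cdot A$: if $f_7$ vanished on $C$, then the cone over $C$ with vertex $\msp_w$ and the residual ``horizontal'' surface (the closure of the graph of $w = -f_7/f_4$ over $(\ell=0)$) would be two distinct surface components of $(\ell = 0)\cap X \sim A$, each an effective Weil divisor equivalent to a positive integral multiple of $A$, which is absurd. The same class-group argument yields $\ell \nmid f_7$, which you use tacitly in case (2) to know that $(\ell = f_7 = 0)$ is a curve (equivalently, that $X$ stays irreducible when $\ell \mid f_4$); this should be stated, as the paper does. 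With these two points repaired, your proof goes through; note also that the difficulty you flag in (1) --- terminality of $\bar{Y}$ along the contracted locus --- is not actually needed, since $K$-trivial small contractions of terminal threefolds always admit flops.
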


\begin{proof}
Let $X \ratmap \mbP (1, 1, 1, 2)$ be the projection from the point $\msp_w$.
Then $\varphi$ resolves the indeterminacy of the projection and we obtain a morphism $Y \to \mbP (1, 1, 1, 2)$.
Let
\[
Y \xrightarrow{\psi} Z \to \mbP (1, 1, 1, 2)
\]
be the Stein factorization, where $Z$ is the hypersurface
\[
Z = (s^2 + s f_4 + \ell f_7 = 0) \subset \mbP (1_x, 1_y, 1_z, 2_t, 4_s) =: \overline{\mbP},
\]
$\psi \colon Y \to Z$ is a birational morphism tha is nothing but the anticnaonical morphism, and $Z \to \mbP (1, 1, 1, 2)$ is a double cover.
Let $\Delta$ be the proper transform of the subvariety $(\ell = f_4 = f_7 = 0)_{\mbP} \subset X$ on $Y$.
The morphism $\psi$ contracts $\Delta$ to the set $\psi (\Delta) = (s = \ell = f_4 = f_7 = 0)_{\overline{\mbP}} \subset Z$.
We see that $\ell \nmid f_7$ by the $\mbQ$-factoriality of $X$, so that $\dim \psi (\Delta) \leq 1$.
It is then easy to see that $\Delta$ is a divisor if and only if $\ell \mid f_4$.
The assertions then follow immediately from \cite[Lemma 3.2]{OkII}.
\end{proof}

In the rest of this section, we consider a Gorenstein singular point $\msp \in X$ and we exclude suitable weighted blowups with center $\msp \in X$ as a Sarkisov center.
By a coordinate change, we may assume that $\msp = \msp_x$ and we introduce the following condition.
Let $\msF = \msF (x, y, z, t, w)$ be the defining polynomial of $X$.

\begin{Cond} \label{cond}
\begin{enumerate}
\item The point $\msp_x$ is contained in $X$.
%\item The weighted blowup $\varphi \colon Y \to X$ at $\msp_x \in X$ with weights $\wt (y, z, t, w) = (3, 1, 2, 2)$ is a divisorial contraction of discrepancy $1$ (in the terminal category).
%Moreover $Y$ admits a unique point $\msq$ at which the $\varphi$-exceptional divisor is not Cartier, and the singularity $\msq \in Y$ is of type $cA/2$ with axial weight $3$.
%\item There are at most $3$ divisors of discrepancy $1$ over $\msp_x \in X$.
%\item $w^2 y, t^3 x, x^5 y^2 \in \msF$.
%\item We define a weight $\bm{w}_0$ on the variables $x, y, z, t, w$ as follows:
%\[
%\bm{w}_0 (x, y, z, t, w) = (0, 3, 1, 2, 2).
%\]
%Then $\bm{w}_0 (\msF) = 6$ and the polynomial $\msF_{\bm{w}_0 = 6}$ is irreducible.
\item We define a weight $\bm{w}_1$ on the variables $x, y, z, t, w$ as follows:
\[
\bm{w}_1 (x, y, z, t, w) = (0, 4, 1, 2, 1).
\]
Then $\bm{w}_1 (\msF) = 6$ and the polynomial $\msF_{\bm{w}_1 = 6}$ is irreducible.
\item We define a weight $\bm{w}'_2$ on the variables $x, y, z, t, w$ as follows:
\[
\bm{w}'_2 (x, y, z, t, w) = (0, 2, 1, 2, 1).
\]
Then $\bm{w}'_2 (\msF) = 4$, $\msF_{\bm{w}'_2 = 4} = \alpha x^5 y^2 + \beta y w^2$ for some $\alpha, \beta \in \mbC \setminus \{0\}$, $\msF_{\bm{w}'_2 = 5} = 0$ and $t^3 \in \msF$.
\end{enumerate}
\end{Cond}

\begin{Lem} \label{lem:condeq}
Suppose that $X \subset \mbP (1, 1, 1, 2, 3)$ satisfies \emph{Condition~\ref{cond}}.
Then the following assertions hold.
\begin{enumerate}
\item $\msF \in (y, z, t)$.
\item We can write 
\[
\msF_{\bm{w}_1 = 6} = \beta w^2 y + \gamma x^2 y z w + x^3 y g_2 (z^2, t) + x g_6 (z^2, t) \in (x, w),
\] 
where $\beta \ne 0, \gamma \in \mbC$ and $g_2 (z^2, t), g_6 (z^2, t)$ are quasi-homogeneous polynomials of degree $2, 6$, respectively, with respect to the weight $\wt (z, t) = (1, 2)$.
Moreover, the polynomial $g_6 (z^2, t)$ is nonzero.
\item We can write
\[
\msF_{\bm{w}'_2 = 6} = y \msH (x, y, z, t, w) + x g_6 (z^2, t) \in (x, w),
\]
where $\msH (x, y, z, t, w)$ and $g_6 (z^2, t)$ are quasi-homogeneous polynomials of degree $4$ and $6$, respectively with respect to the $\bm{w}'_2$-weight.
Moreover, the polynomial $g_6 (z^2, t)$ is nonzero.
%\item $(\msF - \msF_{\bm{w}'_2 = 4})/y \in (x, w)$. 
\end{enumerate}
\end{Lem}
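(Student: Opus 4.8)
The plan is to prove all three assertions by one elementary device: a monomial $x^a y^b z^c t^d w^e$ can occur in $\msF$ only if its standard degree satisfies $a + b + c + 2d + 3e = 7$, and Condition~\ref{cond} records the pieces of $\msF$ of small $\bm{w}_1$- and $\bm{w}'_2$-weight. Thus each part reduces to a finite enumeration of degree-$7$ monomials of prescribed auxiliary weight, followed by a grouping and two nonvanishing checks. For assertion (1), I would first list the degree-$7$ monomials involving only $x$ and $w$; these are exactly $x^7$, $x^4 w$ and $x w^2$, of $\bm{w}'_2$-weights $0$, $1$ and $2$. Since Condition~\ref{cond}(3) gives $\bm{w}'_2 (\msF) = 4$, no monomial of $\bm{w}'_2$-weight below $4$ can appear, so none of these three occurs and hence $\msF \in (y, z, t)$.

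For (2), I would solve the system $a + b + c + 2d + 3e = 7$ and $4b + c + 2d + e = 6$ coming from the standard degree and the $\bm{w}_1$-weight $6$. Subtracting gives $a = 1 + 3b - 2e$, and $4b \le 6$ forces $b \in \{0, 1\}$; running through the finitely many solutions shows that the degree-$7$ monomials of $\bm{w}_1$-weight $6$ are precisely those making up the four terms displayed in the statement. The coefficient of $w^2 y$ is nonzero because Condition~\ref{cond}(3) records it as $\beta \ne 0$ in $\msF_{\bm{w}'_2 = 4}$, and $g_6 \ne 0$ because $t^3 \in \msF$: the unique degree-$7$ homogenization of $t^3$ is $x t^3$, which is precisely the $t^3$-term of $x g_6 (z^2, t)$.

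For (3), the same bookkeeping with $\bm{w}'_2$-weight $6$, namely subtracting $2b + c + 2d + e = 6$ from the degree relation, gives $a = 1 + b - 2e$. If $e \ge 1$ the monomial is divisible by $w$, while if $e = 0$ then $a = 1 + b \ge 1$, so it is divisible by $x$; hence $\msF_{\bm{w}'_2 = 6} \in (x, w)$. The monomials divisible by $y$ are exactly those with $b \ge 1$, and factoring out one $y$ leaves a $\bm{w}'_2$-weight-$4$ monomial, so these assemble into $y \msH$ with $\msH$ of $\bm{w}'_2$-degree $4$; the monomials not divisible by $y$ have $b = 0$, which forces $e = 0$, $a = 1$ and $c + 2d = 6$ with $c$ even, so they form exactly $x g_6 (z^2, t)$. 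This $g_6$ agrees with the one in (2), since the four monomials $x z^6$, $x z^4 t$, $x z^2 t^2$, $x t^3$ all have weight $6$ for both $\bm{w}_1$ and $\bm{w}'_2$, and its nonvanishing again follows from $t^3 \in \msF$.

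I do not expect a genuine obstacle here beyond careful bookkeeping; the only two points needing attention are (i) reading ``$t^3 \in \msF$'' through the affine chart $x = 1$ at $\msp_x$, so that it honestly produces the $t^3$-coefficient of $g_6$ via the monomial $x t^3$, and (ii) verifying that the $z$-exponents that arise are forced to be even, so that the part of $\msF_{\bm{w}'_2 = 6}$ not divisible by $y$ is genuinely a polynomial in $z^2$ and $t$ and the two occurrences of $g_6$ in (2) and (3) coincide.
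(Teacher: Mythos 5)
Your proposal is correct and follows essentially the same route as the paper's proof: each part is reduced to a finite enumeration of degree-$7$ monomials of prescribed auxiliary weight, exactly as the paper does. The only divergences are in which clause of Condition~\ref{cond} is invoked: for (1) the paper notes that $x^7, x^4 w, x w^2$ have $\bm{w}_1$-weight at most $2 < 6$ and cites Condition~\ref{cond}(2), where you use $\bm{w}'_2$-weights $0,1,2 < 4$ and Condition~\ref{cond}(3) (both work); and for $g_6 \ne 0$ the paper argues that $g_6 = 0$ would make $\msF_{\bm{w}_1 = 6}$ divisible by $y$, contradicting irreducibility, whereas you use $t^3 \in \msF$ read as $x t^3 \in \msF$ --- the correct reading, since the literal monomial $t^3$ has degree $6$ --- which even yields the stronger conclusion $t^3 \in g_6$. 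One caution: your bookkeeping in (2), carried out as you describe via $a = 1 + 3b - 2e$, actually produces the term $x^4 y\, g_2 (z^2, t)$ (e.g.\ $x^4 y t$ has degree $7$ and $\bm{w}_1$-weight $6$), not the term $x^3 y\, g_2 (z^2, t)$ printed in the statement, which has standard degree $6$ and so cannot occur in $\msF$; this is a typo in the paper that your enumeration silently corrects, so your claim that the solutions match ``precisely the four terms displayed in the statement'' should be amended to note the exponent discrepancy.
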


\begin{proof}
The $\bm{w}_1$-order of any monomial of degree $7$ consisting only of $x$ and $w$ is at most $2$, and hence none of them appear in $\msF$ by (2) of Condition~\ref{cond}.
This proves (1).

The assertion (2) follows by writing down monomials of order $7$ with respect to the weights $(x, y, z, t, w) = (1, 1, 1, 2, 3)$ and of $\bm{w}_1$-order $6$.
The condition $\beta \ne 0$ follows from (3) of Condition~\ref{cond}.
The polynomial $g_6 (z^2, t)$ cannot be zero because otherwise $\msF_{\bm{w}_1 = 6}$ is divisible by $y$ and this is impossible by (2) of Condition~\ref{cond}.

The assertion (3) also follows by writing down monomials of order $7$ with respect to the weights $(x, y, z, t, w) = (1, 1, 1, 2, 3)$ and of $\bm{w}'_2$-order $6$.
The polynomial $g_6 (z^2, t)$ is the same as that in (2), and hence it is nonzero.
\end{proof}

\begin{Lem} \label{lem:hatpsi1}
Suppose that $X$ satisfies \emph{Condition~\ref{cond}} and let $\varphi \colon Y \to X$ be the weighted blowup of $\msp_x \in X$ with weight $\wt (y, z, t, w) = (4, 1, 2, 1)$.
\begin{enumerate}
\item The $\varphi$-exceptional divisor $E$ is a divisor of discrepancy $1$ over $\msp_x \in X$.
\item The anticnaonical divisor $-K_Y$ is in the boundary of the mobile cone $\bMov (Y)$.
\end{enumerate}
In particular, $\varphi$ is not a Sarkisov extraction.
\end{Lem}

\begin{proof}
The polynomial $\msF_{\bm{w}_1 = 6} (1, y, z, t, w)$ is irreducible by (2) of Condition \ref{cond}.
Hence we have an isomorphism
\[
E \cong (\msF_{\bm{w}_1 = 6} (1, y, z, t, w) = 0) \subset \mbP (4_y, 1_z, 2_t, 1_w),
\]
and $E$ is irreducible.
It is then easy to see that $a_X (E) = 1$.
This shows (1).

Let $\Phi \colon \mbT \to \mbP := \mbP (1, 1, 1, 2, 3)$ be the weighted blowup of $\msp_x \in \mbP$ with weights $\wt (y, z, t, w) = (4, 1, 2, 1)$, where $\mbT$ is a rank $2$ toric variety whose description is given in the diagram below.
We identify $Y$ with the proper transform of $X$ in $\mbT$ and $\varphi$ with the restriction $\Phi|_Y$.
We run a $2$-ray game for $\mbT$ and obtain the following diagram.
\[
\xymatrix{
\text{$\mbT := \mbT \begin{pNiceArray}{cc|cccc}[first-row]
u & x & w & z & t & y \\
0 & 1 & 3 & 1 & 2 & 1 \\
-1 & 0 & 1 & 1 & 2 & 4
\end{pNiceArray}$} \ar@{-->}[r]^{\Theta} \ar[d]_{\Phi} &
\text{$\mbT \begin{pNiceArray}{ccc|ccc}[first-row]
u & x & w & z & t & y \\
1 & 1 & 2 & 0 & 0 & -3 \\
1 & 4 & 11 & 3 & 6 & 0
\end{pNiceArray} =: \breve{\mbT}$} \ar[d]^{\breve{\Phi}} \\
\mbP (1_x, 3_w, 1_z, 2_t, 1_y) & \mbP (1_u, 4_x, 11_w, 3_z, 6_t)}
\]
The birational morphisms $\Phi, \breve{\Phi}$, and the birational map $\Theta$ are defined as follows:
\[
\begin{split}
\Phi & \colon (u\!:\!x \, | \, w\!:\!z\!:\!t\!:\!y) \mapsto (x\!:\!w u\!:\!z u\!:\!t u^2\!:\!y u^4), \\
\check{\Phi} & \colon (u\!:\!x\!:\!w \, | \,  z\!:\!t\!:\!y) \mapsto (u y^{1/3}\!:\!x y^{1/3}\!:\!w y^{2/3}\!:\!z\!:\!t), \\
\Theta & \colon (u\!:\!x \, | \, w\!:\!z\!:\!t\!:\!y) \mapsto (u\!:\!x\!:\!w \, | \,  z\!:\!t\!:\!y).
\end{split}
\]
We set $f := F_{\bm{w}_1 = 6}$ which is an irreducible polynomial by (2) of Condition \ref{cond}.
We can write the defining polynomial of $Y$ as
\[
\mcF (u, x, y, z, t, w) := u^{-6} F (x, y u^4, z u, t u^2, w u) = f + u g,
\]
for some polynomial $g = g (u, x, y, z, t, w)$.
We define $\breve{Y} := \Theta_* Y$.
The varieties $Y$ and $\breve{Y}$ are the hypersurfaces in $\mbT$ and $\breve{\mbT}$, respectively, defined by the equation $\mcF = 0$.
By (1) and (2) of Lemma~\ref{lem:condeq}, we have $\msF \in (y, z, t)$ and $\mcF (0, 0, y, z, t, w) = \beta w^2 y$ for some nonzero $\beta \ne 0$.
Then we have
\[
\begin{split}
\Gamma &:= (z = t = y = 0)_{\mbT} \cap Y = (z = t = y = 0)_{\mbT} \subset Y, \\
\breve{\Gamma} &:= (u = x = 0)_{\breve{\mbT}} \cap \breve{Y} = (u = x = y = 0)_{\breve{\mbT}} \subset \breve{Y}.
\end{split}
\]
Both $\Gamma$ and $\breve{\Gamma}$ are irreducible smooth curves.
The restriction $\theta := \Theta|_Y \colon Y \ratmap \breve{Y}$ is a birational map which gives an isomorphism $Y \setminus \Gamma \cong \breve{Y} \setminus \breve{\Gamma}$. 
Let $\breve{X}$ be the image of $\breve{Y}$ by $\breve{\Phi}$, which is a hypersurface in $\mbP (1, 3, 4, 6, 11)$ defined by the equation
\[
\breve{F} := \mcF (u, x, 1, z, t, w) = 0.
\]
The morphism $\breve{\varphi} := \breve{\Phi}|_{\breve{Y}} \colon \breve{Y} \to \breve{X}$ is a birational morphism which contracts the divisor $\breve{E} := (y = 0)_{\breve{\mbT}} \cap \breve{Y}$ to the curve $\breve{C} := (u = x = w = 0)_{\breve{\mbP}}$.
Note that $\breve{C} \subset \breve{X}$ since $\mcF \in (u, x, w)$ by (2) of Lemma~\ref{lem:condeq}, and hence $\breve{F} \in (u, x, w)$.
Thus $Y$ and $\breve{Y}$ are the small $\mbQ$-factorial modifications of $Y$ and we have the decomposition $\bMov (Y) = \Nef (Y) \cup \theta^*\Nef (\breve{Y})$. 

For a variable $\xi \in \{u, x, y, z, t, w\}$, we set $D_{\xi} := (\xi = 0)_{\mbT} \cap Y$ and $\breve{D}_{\xi} := (\xi = 0)_{\breve{\mbT}} \cap \breve{Y}$.
Clearly we have $\theta^*\breve{D}_v = D_v$.
The morphisms $\varphi$ and $\breve{\varphi}$ are defined by some positive multiples of $D_x$ and $\breve{D}_z$, respectively.
It follows that
\[
\begin{split}
\Nef (Y) &= \mbR_{\ge 0} [D_x] + \mbR_{\ge 0} [D_w], \\
\Nef (\breve{Y}) &= \mbR_{\ge 0} [\breve{D}_w] + \mbR_{\ge 0} [\breve{D}_z].
\end{split}
\]
and thus
\[
\bMov (Y) = \mbR_{\ge 0} [D_x] + \mbR_{\ge 0} [D_z].
\]
We have $-K_Y = -\varphi^*K_X - E \sim D_z$.
Thus $-K_Y$ is in the boundary of $\bMov (Y)$.
This shows that $\varphi$ is not a Sarkisov center by \cite[Theorem~3.2]{AZ}.
\end{proof}

%\[
%\begin{split}
%F_{\bm{w}_0 = 6} &= w y z x^2 + w a_4 (z^2, t) + y^2 x^5 + y z g_2 (z^2, t) x^3 + g_6 (z^2, t) x \\
%F_{\bm{w}_1 = 6} &= w^2 y + w y x^2 z + y a_2 (z, t) x^3 + x g_6 (z^2, t). \\
%F_{\bm{w}'_2 = 4} &= y^2 x^5 + y w^2 + y w z x^2 + y a_2 (z, t) x^4 + w b_3 (z, t) x + c_4 (z, t) x^3 \\
%F_{\bm{w}'_2 = 6} &=  y^3 x^4 + y^2 (w a_1 (z, t) x + a_2 (z, t) x^3) + y (w b_3 (z, t) + b_4 (z, t) x^2) + g_6 (z^2, t) x.
%\end{split} 
%\]

We assume that $X$ satisfies Condition \ref{cond}.
We set $\msG := \msF - \msF_{\bm{w}'_2 = 4}$.
Then, by (3) of Condition~\ref{cond}, we can write
\[
\msF = \msF_{\bm{w}'_2 = 4} + \msG = y h + \msG,
\] 
where
\[
h := \alpha x^5 y + \beta w^2
\]
with nonzero $\alpha, \beta \in \mbC$.
We embed $X$ into the weighted projective $5$-space $\mbP (1, 1, 1, 2, 3, 6)$ with homogeneous coordinates $x, y, z, t, w, s$ of weights $1, 1, 2, 3, 6$, respectively, as a codimension $2$ complete intersection subvariety defined by
\begin{equation} \label{eq:reembX}
\begin{split}
\msF_1 (x, y, z, t, w, s) := y s + \msG = 0, \\
\msF_2 (x, y, z, t, w, s) :=  s - h = 0.
\end{split}
\end{equation}

\begin{Lem} \label{lem:hatpsi2}
Under the above setting, let $\varphi \colon Y \to X$ be the weighted blowup of $\msp_x \in X$ with weights $\wt (y, z, t, w, s) = (2, 1, 2, 1, 4)$.
\begin{enumerate}
\item The $\varphi$-exceptional divisor $E$ is a divisor of discrepancy $1$ over $\msp_x \in X$.
\item The anticanonical divisor $-K_Y$ is in the boundary of the mobile cone $\bMov (Y)$.
\end{enumerate}
In particular, $\varphi$ is not a Sarkisov extraction.
\end{Lem}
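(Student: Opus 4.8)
My plan is to mirror the proof of Lemma~\ref{lem:hatpsi1}, working with the codimension~$2$ re-embedding \eqref{eq:reembX} and the blowup weight $\bm{w} := \wt (y, z, t, w, s) = (2, 1, 2, 1, 4)$, extended by $\bm{w} (x) = 0$. For assertion~(1) I would first describe $E$. Since $\bm{w} (s) = 4$ while $\bm{w} (\alpha x^5 y) = \bm{w} (\beta w^2) = 2$, the lowest $\bm{w}$-weight part of $\msF_2 = s - h$ has order $2$ and equals $\alpha x^5 y + \beta w^2$; and since $\bm{w}$ restricts to $\bm{w}'_2$ on $x, y, z, t, w$, Lemma~\ref{lem:condeq}(3) shows that the lowest $\bm{w}$-weight part of $\msF_1 = y s + \msG$ has order $6$ and equals $y s + y \msH + x g_6 (z^2, t)$. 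Hence
\[
E \cong (\alpha y + \beta w^2 = y s + y \msH + g_6 (z^2, t) = 0) \subset \mbP (2_y, 1_z, 2_t, 1_w, 4_s).
\]
Eliminating $y$ via the first equation, the second becomes linear in $s$ with leading coefficient a nonzero multiple of $w^2$ and with the nonzero polynomial $g_6 (z^2, t)$ (Lemma~\ref{lem:condeq}) appearing in its $s$-constant term; as $g_6$ is not divisible by $w$, this shows $E$ is irreducible. Because $\msp_x$ is a smooth point of the ambient $\mbP (1, 1, 1, 2, 3, 6)$, the ambient weighted blowup has discrepancy $(2 + 1 + 2 + 1 + 4) - 1 = 9$, so adjunction for the complete intersection of $\bm{w}$-orders $6$ and $2$ yields $a_X (E) = 9 - 6 - 2 = 1$, proving~(1).

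For assertion~(2) I would realize $\varphi$ torically. Let
\[
\mbT := \mbT \begin{pNiceArray}{cc|ccccc}[first-row]
u & x & w & s & z & t & y \\
0 & 1 & 3 & 6 & 1 & 2 & 1 \\
-1 & 0 & 1 & 4 & 1 & 2 & 2
\end{pNiceArray}
\]
be the weighted blowup of $\msp_x \in \mbP (1, 1, 1, 2, 3, 6)$ with weight $\bm{w}$, identify $Y$ with the proper transform of $X$, and denote by $\mcF_1, \mcF_2$ the bi-homogenizations of $\msF_1, \msF_2$; a direct computation gives their bidegrees as $(7, 6)$ and $(6, 2)$. Writing divisor classes as bidegrees, one has $-K_{\mbT} = (14, 9)$, whence
\[
-K_Y = -K_{\mbT} - [\mcF_1] - [\mcF_2] = (14, 9) - (7, 6) - (6, 2) = (1, 1) = [D_z].
\]
Running the $2$-ray game for $\mbT$, I expect the movable cone to be $\mbR_{\ge 0} (1, 0) + \mbR_{\ge 0} (1, 1)$, subdivided by the two interior walls $[D_w] = (3, 1)$ and $[D_s] = (6, 4)$; crossing them produces small modifications $Y \ratmap Y_1 \ratmap \breve{Y}$, and the contraction at the far ray $(1, 1)$ is a divisorial contraction $\breve{\varphi} \colon \breve{Y} \to \breve{X}$ of $\breve{D}_y = (y = 0)_{\breve{\mbT}}$. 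Granting that this game restricts to $Y$, I obtain $\Nef (Y) = \mbR_{\ge 0} [D_x] + \mbR_{\ge 0} [D_w]$ and $\bMov (Y) = \mbR_{\ge 0} [D_x] + \mbR_{\ge 0} [D_z]$.

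It then follows that $-K_Y \sim D_z$ spans an extreme ray of $\bMov (Y)$, so $-K_Y$ lies in the boundary of $\bMov (Y)$; by \cite[Theorem~3.2]{AZ} this shows that $\varphi$ is not a Sarkisov extraction, giving~(2) and the final assertion. The main obstacle is exactly the clause ``granting that this game restricts to $Y$'': I must verify that $Y$ meets each unstable and flipping/flopping locus in codimension at least $2$, so that $Y \ratmap Y_1 \ratmap \breve{Y}$ are genuine small $\mbQ$-factorial modifications and $\breve{\varphi}$ is genuinely divisorial. As in Lemma~\ref{lem:hatpsi1}, the relevant contracted loci must be curves; for instance $(z = t = y = 0)_{\mbT} \cap Y$ is the irreducible curve $s u^2 = \beta w^2$, using $\msF \in (y, z, t)$ from Lemma~\ref{lem:condeq}(1). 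The genuinely new feature, absent from Lemma~\ref{lem:hatpsi1}, is the additional wall $[D_s]$ arising from the re-embedding variable $s$, for which I would analyze $\mcF_1$ and $\mcF_2$ along the stratum where $s$ vanishes — invoking $g_6 (z^2, t) \ne 0$ and the shape of $\msH$ from Lemma~\ref{lem:condeq} — to confirm irreducibility of $\breve{Y}$ and the requisite dimension counts throughout the game.
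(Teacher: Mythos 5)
Your proposal is correct and follows the paper's proof essentially step for step: for (1) the paper performs the same computation, exhibiting $E$ as $(y s + \msF_{\bm{w}'_2 = 6}(1,y,z,t,w) = \alpha y + \beta w^2 = 0)$ in $\mbP (2_y, 1_z, 2_t, 1_w, 4_s)$ and eliminating $y$ to get $E \cong (\delta w^2 s + g_6 (z^2, t) = 0) \subset \mbP (1_z, 2_t, 1_w, 4_s)$, irreducible since $\delta \ne 0$ and $g_6 \ne 0$; for (2) it uses exactly your rank $2$ toric variety $\mbT$, the same wall structure, and concludes $-K_Y \sim D_z \in \partial \bMov (Y)$ via \cite[Theorem~3.2]{AZ}.

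One correction to the fine structure you ``expect'': at the first wall $[D_w]$ the toric flip $\Theta' \colon \mbT \ratmap \mbT'$ restricts to an \emph{isomorphism} on $Y$, not a nontrivial small modification, because the contracted locus misses $Y$ entirely: on $(s = z = t = y = 0)_{\mbT}$ one has $\mcF_1 \equiv 0$ (as $\msF_1 \in (y,z,t)$) while $\mcF_2 = s u^2 - h$ restricts to $-\beta w^2$, and $w \ne 0$ there by irrelevance, so $(s = z = t = y = 0)_{\mbT} \cap Y = \emptyset$. Consequently $\Nef (Y) = \mbR_{\ge 0} [D_x] + \mbR_{\ge 0} [D_s]$, not $\mbR_{\ge 0} [D_x] + \mbR_{\ge 0} [D_w]$, and there is exactly \emph{one} genuine small modification $Y \ratmap \breve{Y}$, occurring at the $[D_s]$ wall; its flipped loci are $(z = t = y = 0)_{\mbT} \cap Y$ (your curve $s u^2 = \beta w^2$, which you computed correctly) and $(u = x = w = 0)_{\breve{\mbT}} \cap \breve{Y} = (u = x = w = y = 0)_{\breve{\mbT}}$, the second component $(u = x = w = s = 0)$ being excluded by the irrelevant ideal of $\breve{\mbT}$. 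So for the $[D_s]$ wall the strata to analyze are these two, rather than ``the stratum where $s$ vanishes'' as you suggest; and irreducibility of $\breve{Y}$ is automatic since it is the proper transform of the irreducible $Y$. None of this changes the outcome: $\bMov (Y) = \mbR_{\ge 0} [D_x] + \mbR_{\ge 0} [D_z]$, the map at the far ray contracts $\breve{E} = (y = 0)_{\breve{\mbT}} \cap \breve{Y}$ to the curve $(u = x = w = s = 0)$, and your conclusion stands exactly as in the paper.
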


\begin{proof}
We have an isomorphism
\[
E \cong (y s + \msF_{\bm{w}'_2 = 6} (1, y, z, t, w) = \alpha y + \beta w^2 = 0) \subset \mbP (2_y, 1_z, 2_t, 1_w, 4_s).
\]
We have $\msF_{\bm{w}'_2 = 6} (1, y, z, t, w) = y \msH (1, y, z, t, w) + g_6 (z^2, t)$ and $g_6 (z^2, t) \ne 0$ by (3) of Condition~\ref{lem:condeq}.
Replacing $s$ and eliminating the variable $y$, we have an isomorphism
\[
E \cong (\delta w^2 s + g_6 (z^2, t) = 0) \subset \mbP (1_z, 2_t, 1_w, 4_s),
\]
where $\delta = - \beta/\alpha \in \mbC \setminus \{0\}$.
The polynomial $\delta w^2 s + g_6 (z^2, t)$ is irreducible since $\delta \ne 0$ and $g_6 (z^2, t) \ne 0$.
This implies that $E$ is irreducible and it is then easy to see that $a_X (E) = 1$.
This proves (1).

We prove (2).
Let $\Phi \colon \mbT \to \mbP (1, 1, 1, 2, 3, 6) =: \mbP$ be the weighted blowup of $\msp_x \in \mbP$ with weights $\wt (y, z, t, w, s) = (2, 1, 2, 1, 4)$, where $\mbT$ is a rank $2$ toric variety whose description is given in the diagram below.
We identify $Y$ with the proper transform of $X$ in $\mbT$ and $\varphi$ with $\Phi|_Y$.
We run a $2$-ray game for $\mbT$ and obtain the following diagram. 
\[
\xymatrix{
\text{$\mbT := \mbT \begin{pNiceArray}{cc|ccccc}[first-row]
u & x & w & s & z & t & y \\
0 & 1 & 3 & 6 & 1 & 2 & 1 \\
-1 & 0 & 1 & 4 & 1 & 2 & 2
\end{pNiceArray}$} \ar@{-->}[r]^{\Theta} \ar[d]_{\Phi} &
\text{$\mbT \begin{pNiceArray}{cccc|ccc}[first-row]
u & x & w & s & z & t & y \\
1 & 1 & 2 & 2 & 0 & 0 & -1 \\
1 & 2 & 5 & 8 & 1 & 2 & 0
\end{pNiceArray} =: \breve{\mbT}$} \ar[d]^{\breve{\Phi}} \\
\mbP (1_x, 3_w, 6_s, 1_z, 2_t, 1_y) & \mbP (1_u, 2_x, 5_w, 8_s, 1_z, 2_t)}
\]
The birational morphisms $\Phi, \breve{\Phi}$, and the birational map $\Theta$ are defined as follows:
\[
\begin{split}
\Phi & \colon (u\!:\!x \, | \, w\!:\!s\!:\!z\!:\!t\!:\!y) \mapsto (x\!:\!w u\!:\!s u^4\!:\!z u\!:\!t u^2\!:\!y u^2), \\
\breve{\Phi} & \colon (u\!:\!x\!:\!w\!:\!s \, | \, \!:\!z\!:\!t\!:\!y) \mapsto (u y\!:\!x y^2\!:\!w y^5\!:\!s y^8\!:\!z\!:\!t), \\
\Theta & \colon (u\!:\!x \, | \, w\!:\!s\!:\!z\!:\!t\!:\!y) \mapsto (u\!:\!x\!:\!w\!:\!s \, | \, \!:\!z\!:\!t\!:\!y).
\end{split}
\]
We define
\[
\begin{split}
\mcF_1 (u, x, y, z, t, w, s) &:= u^{-6} \msF_1 (x, y u^2, z u, t u^2, w u, s u^4) \\
 &= y s + u^{-6} \msG (x, y u^2, z u, t u^2, w u), \\
\mcF_2 (u, x, y, z, t, w, s) &:= u^{-2} \msF_2 (x, y u^2, z u, t u^2, w u, s u^4)  \\
&= s u^2 - h (x, y, z, t, w),
\end{split}
\]
and set $\breve{Y} := \Phi_* Y$.
Then $Y$ and $\hat{Y}$ are the complete intersections of codimension $2$ in $\mbT$ and $\breve{\mbT}$, respectively, defined by the equations $\mcF_1 = \mcF_2 = 0$. 
In the above $2$-ray game, the first modification is the map $\Theta' \colon \mbT \ratmap \mbT'$, where
\[
\text{$\mbT' := \mbT \begin{pNiceArray}{ccc|cccc}[first-row]
u & x & w & s & z & t & y \\
0 & 1 & 3 & 6 & 1 & 2 & 1 \\
-1 & 0 & 1 & 4 & 1 & 2 & 2
\end{pNiceArray}$} 
\]
The map $\Theta'$ restricts to an isomorphism on $\mbT \setminus (s = z = t = y = 0)_{\mbT}$.
We have $\msF_1 \in (y, z, t)$ since $\msG \in (y, z, t)$ (1) of Lemma~\ref{lem:condeq}, and we have $w^2 \in h$ by (3) of Condition~\ref{cond}.
Hence
\[
(s = z = t = y = 0)_{\mbT} \cap Y = \emptyset.
\]
This implies that $\Theta'|_Y$ is an isomorphism.
The next modification is the map $\mbT' \ratmap \breve{\mbT}$.
We have $\msF_1 \in (y, z, t)$ as explained above.
Moreover, we have 
\[
\msF_1 (0, x, y, z, t, w, s) = y s + \msF_{\bm{w}'_2 = 6} (x, y, z, t, w)
\]
and $\msF_{\bm{w}'_2 = 6} \in (x, w)$ by (3) of Lemma~\ref{lem:condeq}.
Hence 
\[
\begin{split}
\mcF_1 (u, x, 0, 0, 0, w, s) &= 0, \\
\mcF_2 (u, x, 0, 0, 0, w, s) &= s u^2 - \beta w^2, \\
\mcF_1 (0, 0, y, z, t, 0, s) &= y s, \\
\mcF_2 (0, 0, y, z, t, 0, s) &= 0.
\end{split}
\]
Then we have
\[
\begin{split}
\Gamma &:= (z = t = y = 0)_{\mbT} \cap Y = (z = t = y = s u^2 - \beta w^2 = 0)_{\mbT}, \\
\breve{\Gamma} &= (u = x = w = 0)_{\breve{\mbT}} \cap \breve{Y} = (u = x = w  = y = 0)_{\breve{\mbT}}
\end{split}
\]
Both $\Gamma$ and $\breve{\Gamma}$ are irreducible smooth curves.
The restriction $\theta := \Theta|_Y \colon Y \ratmap \breve{Y}$ is a birational map which gives an isomorphism $Y \setminus \Gamma \cong \breve{Y} \setminus \breve{\Gamma}$.
Let $\breve{X}$ be the image of $\breve{Y}$ by $\breve{\Phi}$, which is a codimension $2$ complete intersections in $\mbP (1_u, 1_z, 2_x, 2_t, 5_w, 8_s)$ defined by the equations
\[
\begin{split}
\breve{F}_1 &= \mcF_1 (u, x, 1, z, t, w, s) = 0 \\
\breve{F}_2 &= \mcF_2 (u, x, 1, z, t, w, s) = 0.
\end{split}
\]
The morphism $\breve{\varphi} := \breve{\Phi}|_{\breve{Y}} \colon \breve{Y} \to \breve{X}$ is a birational morphism which contracts the divisor $\breve{E} := (y = 0)_{\breve{T}} \cap \breve{Y}$ to the curve $\breve{C} = (u = x = w = s = 0)_{\breve{\mbP}} \subset \breve{X}$.

As before, for a variable $\xi \in \{u, x, y, z, t, w\}$, we set $D_{\xi} := (\xi = 0)_{\mbT} \cap Y$ and $\breve{D}_{\xi} := (\xi = 0)_{\breve{\mbT}} \cap \breve{Y}$.
The morphisms $\varphi$ and $\breve{\varphi}$ are defined by some positive multiples of $D_x$ and $\breve{D}_z$, respectively.
It follows that
\[
\begin{split}
\Nef (Y) &= \mbR_{\ge 0} [D_x] + \mbR_{\ge 0} [D_s], \\
\Nef (\breve{Y}) &= \mbR_{\ge 0} [\breve{D}_s] + \mbR_{\ge 0} [\breve{D}_z].
\end{split}
\]
and thus
\[
\bMov (Y) = \mbR_{\ge 0} [D_x] + \mbR_{\ge 0} [D_z].
\]
We have $-K_Y = -\varphi^*K_X - E \sim D_z$.
Thus $-K_Y$ is in the boundary of $\bMov (Y)$ and the proof is complete.
\end{proof}

%%%%%%%%%%%%%%%%%%%%%%%%%%%%%%%%%%
%%%%%%%%%%%%%%%%%%%%%%%%%%%%%%%%%%
\section{Proof of Theorem~\ref{mainthm}}
\label{sec:pfmainthm}
%%%%%%%%%%%%%%%%%%%%%%%%%%%%%%%%%%
%%%%%%%%%%%%%%%%%%%%%%%%%%%%%%%%%%

%%%%%%%%%%%%%%%%%%%%%%%%%%%%%%%%%%
\subsection{Analysis of $\hat{X}$}
\label{sec:hatX}
%%%%%%%%%%%%%%%%%%%%%%%%%%%%%%%%%%

Let $X = X_{12, 14} \subset \mbP (1, 2, 3, 4, 7, 11)$ be a quasismooth weighted complete intersection of type $(12, 14)$.
We recall from \cite{DG23} the construction of an elementary link from $X$ to a Fano $3$-fold hypersurface $\hat{X} = \hat{X}_7 \subset \mbP (1, 1, 1, 2, 3)$ of degree $7$ and give a detailed analysis of $\hat{X}$.

Let $x, y, z, t, v, w$ be the homogeneous coordinates of $\mbP := \mbP (1, 2, 3, 4, 7, 11)$ of weights $1, 2, 3, 4, 7, 11$, respectively.
We set $\msq := \msp_w \in X$.
Note that $\msq \in X$ is a quotient singularity of type $\frac{1}{11} (1, 2, 9)$ and it is the unique singularity of $X$.
Let $\varphi \colon Y \to X$ be the Kawamata blowup at $\msq \in X$.
We can choose $y, t, v$ as local orbifold coordinates of $\msq \in X$ and $\varphi$ is the weighted blowup with weights $\wt (y, t, v) = \frac{1}{11} (1, 2, 9)$.
Let $\Phi \colon \mbT \to \mbP$ be the weighted blowup of $\msq \in \mbP$ with weights $\wt (x, y, z, t, v) = (6, 1, 7, 2, 9)$, where $\mbT$ is the rank $2$ toric variety whose description is given in the diagram below.
We can and do identify $Y$ with the proper transform $\Phi_*^{-1} X \subset \mbT$, and then $\varphi$ coincides with the restriction $\Phi|_Y$.
We run a $2$-ray game from $\mbT$ and observe that it ends with a birational contraction $\hat{\Phi} \colon \hat{\mbT} \to \mbP (1, 1, 1, 2, 3, 6)$ which contracts a divisor $(x = 0)_{\hat{\mbT}}$ to the point $\msp_z$.
These are described in the following diagram.

\[
\xymatrix{
\text{$\mbT := \mbT \begin{pNiceArray}{cc|ccccc}[first-row]
u & w & y & t & v & z & x \\
0 & 11 & 2 & 4 & 7 & 3 & 1 \\
-11 & 0 & 1 & 2 & 9 & 7 & 6
\end{pNiceArray}$} \ar@{-->}[r]^{\Theta} \ar[d]_{\Phi} &
\text{$\mbT \begin{pNiceArray}{ccccc|cc}[first-row]
u & w & y & t & v & z & x \\
3 & 7 & 1 & 2 & 2 & 0 & -1 \\
1 & 6 & 1 & 2 & 3 & 1 & 0
\end{pNiceArray} =: \hat{\mbT}$} \ar[d]^{\hat{\Phi}} \\
\mbP (11_w, 2_y, 4_t, 7_v, 3_z, 1_x) & \mbP (1_u, 6_w, 1_y, 2_t, 3_v, 1_z)}
\]
The birational morphisms $\Phi, \hat{\Phi}$, and the birational map $\Theta$ are defined as follows:
\[
\begin{split}
\Phi & \colon (u\!:\!w \, | \, y\!:\!t\!:\!v\!:\!z\!:\!x) \mapsto (w\!:\!y u^{1/11}\!:\!t u^{2/11}\!:\!v u^{9/11}\!:\!z u^{7/11}\!:\!x u^{6/11}), \\
\hat{\Phi} & \colon (u\!:\!w\!:\!y\!:\!t\!:\!v \, | \,  z\!:\!x) \mapsto (u x\!:\!w x^6\!:\!y x\!:\!t x^2\!:\!v x^3\!:\!z), \\
\Theta & \colon (u\!:\!w \, | \, y\!:\!t\!:\!v\!:\!z\!:\!x) \mapsto (u\!:\!w\!:\!y\!:\!t\!:\!v \, | \,  z\!:\!x).
\end{split}
\]  

We set $\hat{Y} := \Theta_*Y$ and $\hat{X} := \hat{\Phi} (\hat{Y})$.
We also set $\theta := \Theta|_Y \colon Y \ratmap \hat{Y}$ and $\hat{\varphi} := \hat{\Phi}|_{\hat{Y}} \colon \hat{Y} \to \hat{X}$.
We set 
\[
\hat{\msq} := \msp_z \in \mbP (1_u, 1_y, 1_z, 2_t, 3_v, 6_w).
\]
The morphism $\hat{\varphi}$ is a divisorial contraction of discrepancy $1$ over $\hat{\msq} \in \hat{X}$.

We give descriptions of defining equations of these varieties.
Let $\msF_1 (x, y, z, t, v, w)$ and $\msF_2 (x, y, z, t, v, w)$ be the quasi-homogeneous polynomials of degree $12$ and $14$, respectively, with respect to the weight $\wt (x, y, z, t, v, w) = (1, 2, 3, 4, 7, 11)$ which define $X$ in $\mbP$.

\begin{Lem} \label{lem:eqWCI}
By a suitable choice of the homogeneous coordinates $x, y, z, t, v$ and $w$, the defining polynomials $\msF_1, \msF_2$ of $X$ can be written as
\[
\begin{split}
\msF_1 &= -w x + a_{12} (y^2, t) + \lambda y z v + z^4 + z^2 y b_4 (y, t), \\
\msF_2 &= w z + y c_{12} (y^2, t) + v^2 + g_{14} (x, y, z, t),
\end{split}
\] 
where $a_{12} (y^2, t), b_4 (y^2, t), c_{12} (y^2, t)$ and $g_{14} (y, z, t, v)$ are quasi-homogeneous polynomials of the indicated degree with respect to the weights $\wt (x, y, z, t, v, w) = (1, 2, 3, 4, 7, 11)$ satisfying the following properties.
\begin{enumerate}
\item The polynomial $g_{14}$ is contained in the ideal $(x, z)^2$ and has weighted order at least $18/11$ with respect to $\wt (x, y, z, t) = \frac{1}{11} (6, 1, 7, 2)$.
\item The equations $a_{12} (y^2, t) = y c_{12} (y^2, t) = 0$ does not have a nontrivial solution.
\end{enumerate}
\end{Lem}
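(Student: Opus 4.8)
The plan is to begin with completely general defining equations $\msF_1$ (degree $12$) and $\msF_2$ (degree $14$) and to force their shape using quasismoothness of $X$ together with the given fact that $\msq = \msp_w$ is the only singular point. First I would read off the monomials forced at the coordinate points. At $\msp_w$ the only surviving entries of the Jacobian of $(\msF_1,\msF_2)$ are $\partial\msF_1/\partial x$ and $\partial\msF_2/\partial z$, coming from the unique degree-$12$ and degree-$14$ monomials divisible by $w$, namely $wx$ and $wz$; hence quasismoothness at $\msp_w$ forces $wx\in\msF_1$ and $wz\in\msF_2$. Next, were $\msp_v$ or $\msp_z$ to lie on $X$ they would be a $\tfrac17$- or a $\tfrac13$-quotient singularity of $X$, contradicting uniqueness; so $\msp_v,\msp_z\notin X$, which forces $v^2\in\msF_2$ and $z^4\in\msF_1$ (these being the only pure powers of $v$, $z$ in the relevant degrees). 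Rescaling the six coordinates and the two equations normalises the coefficients of $wx,wz,v^2,z^4$ to $-1,1,1,1$; one checks that all later substitutions preserve these.

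Then I would reach the normal form by a finite sequence of admissible changes. Since $-wx$ is the only $w$-monomial of $\msF_1$, every other monomial of $\msF_1$ divisible by $x$ equals $x\cdot(\text{weight }11)$ and can be absorbed by a shift $w\mapsto w+(\text{weight-}11\text{ form})$, after which $\msF_1$ contains $x$ only in $-wx$. The two extra $w$-monomials $wx^3,wxy$ of $\msF_2$ are then removed by the generator change $\msF_2\mapsto\msF_2+(\alpha x^2+\beta y)\msF_1$, which leaves $\msF_1$ untouched, and finally completing the square in $v$ inside $\msF_2$ (a shift $v\mapsto v-\tfrac12 L$, with $L$ the weight-$7$ coefficient of $v$) kills every term of $\msF_2$ linear in $v$. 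A direct count then shows that the part of $\msF_1$ surviving in the variables $y,z,t,v$ is spanned exactly by $y^6,y^4t,y^2t^2,t^3,z^4,y^3z^2,yz^2t$ and $yzv$, i.e.\ it is already $a_{12}(y^2,t)+z^4+z^2y\,b_4(y,t)+\lambda yzv$. The step I expect to be delicate is the bookkeeping here: the $w$-shift on $\msF_1$ feeds $v$-linear monomials back into $\msF_2$ (through $wz$), while completing the square feeds $x$-divisible monomials back into $\msF_1$ (through $\lambda yzv$). The key point is that this interaction is triangular: the feedback into $\msF_1$ is $v$-free, so a second round of $w$-shifts is $v$-free and contributes only $v$-free monomials to $\msF_2$, and the $x$-free part of the feedback lands precisely inside the target monomials $y^3z^2,yz^2t$, merely readjusting $b_4$. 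Hence the process closes after one iteration.

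It remains to verify (1) and (2), and both are automatic. For (1), I would define $g_{14}$ as the part of the $v$- and $w$-free remainder of $\msF_2$ that is not a monomial in $y,t$ alone. A parity check on the degree equation $a+2c+3b+4d=14$ shows that the $x$- and $z$-exponent sum $a+b$ is always even; in particular there is no monomial with $a+b=1$, so the pure $(y,t)$-part is exactly $y\,c_{12}(y^2,t)$ and every other monomial lies in $(x,z)^2$. Minimising the $\wt(x,y,z,t)=\tfrac1{11}(6,1,7,2)$-weight $\tfrac1{11}(6a+7b+c+2d)$ over degree-$14$ monomials with $a+b\ge 2$ gives the value $18$ (attained on the boundary $a+b=2$), so $g_{14}$ has weighted order at least $18/11$ for free. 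For (2), restricting $\msF_1,\msF_2$ to the locus $\{x=z=v=w=0\}$ leaves exactly $a_{12}(y^2,t)$ and $y\,c_{12}(y^2,t)$; since this locus is the $\mu_2$-singular stratum $\mbP(2_y,4_t)$ of $\mbP$ and $\msp_w$ is the only singular point of $X$, the intersection $X\cap\{x=z=v=w=0\}$ must be empty, which is exactly the non-solvability asserted in (2).
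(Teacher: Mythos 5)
Your proposal is correct and follows essentially the same route as the paper: forcing $wx\in\msF_1$, $wz\in\msF_2$, $v^2\in\msF_2$, $z^4\in\msF_1$ from quasismoothness at the coordinate points, normalizing by shifts of $w$ and $v$ (the paper uses a replacement of $z$ where you add $(\alpha x^2+\beta y)\msF_1$ to $\msF_2$ --- an equivalent move), the parity count on exponents for assertion (1), and restriction to the stratum $(x=z=v=w=0)$ for assertion (2). Your only deviations are cosmetic or slightly more careful than the original: you invoke uniqueness of the singular point where the paper argues that quasismoothness would fail directly at $\msp_v$, $\msp_z$ (and leaves (2) implicit), and your explicit ``triangular'' bookkeeping of the feedback between the $w$-shift on $\msF_1$ and the completion of the square in $\msF_2$ spells out precisely the step the paper compresses into the sentence ``By further replacing $v \mapsto v - z\phi_4/2$, we can keep the same expression of $\msF_2$.''
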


\begin{proof}
By the quasismoothness of $X$ at the point $\msq = \msp_w$, we have $w x \in \msF_1$ and $w z \in \msF_2$.

We have $v^2 \in \msF_2$ because otherwise $\msp_v \in X$ and $X$ cannot be quasismooth at $\msp_v$.
Replacing $z$ and $v$, we can write $\msF_2 = w z + v^2 + g'_{14} (x, y, z, t)$ for some quasi-homogeneous polynomial $g'_{14} (x, y, z, t)$ of degree $14$.
We can write $g'_{14} (0, y, 0, t) = y c_{12} (y^2, t)$ for some quasi-homogeneous polynomial $c_{12} (y^2, t)$ of degree $12$.
We set $g_{14} := g'_{14} - y c_{12}$, which is contained in the ideal $(x, z)^2$.
It is easy to observe that any monomial in $g_{14}$ has weight at least $18/11$ with respect to the weight $\wt (x, y, z, t) = \frac{1}{11} (6, 1, 7, 2)$.
We have 
\begin{equation}
\label{eq:XdefF2}
\msF_2 = w z + y c_{12} (y^2, z) + v^2 + g_{14} (x, y, z, t),
\end{equation}
with $g_{14} \in (x, z)^2$.

Filtering off terms divisible by $x$ in $\msF_1$ and then replacing $w$, we may write 
\[
\msF_1 = - w x + f_{12} (y, z, t, v)
\] 
for some quasi-homogeneous polynomial $f_{12} (y, z, t, v)$ of degree $12$.
Note that the expression \eqref{eq:XdefF2} of $\msF_2$ may change after this replacement of $w$ in the sense that the terms of the form $v z \phi_4 (x, y, z, t)$ may be added in $\msF_2$ for some quasi-homogeneous polynomial $\phi_4 (x, y, z, t)$ of degree $4$.
By further replacing $v \mapsto v - z \phi_4/2$, we can keep the same expression \eqref{eq:XdefF2} of $\msF_2$.
There is a unique monomial of degree $12$ in variables $y, z, t, v$ of weights $2, 3, 4, 7$, respectively, which is divisible by $v$ and it is $y z v$.
We also set $a_{12} (y^2, t) := f_{12} (y, 0, t, 0)$.
Then we can write 
\[
f_{12} (y, z, t, v) = a_{12} (y^2, t) + \lambda y z v + z f_9 (y, z, t)
\] 
for some homogeneous polynomial $f_9 (y, z, t)$ of degree $9$.
We have $z^4 \in \msF_1$ because otherwise $\msp_z \in X$ and $X$ cannot be quasismooth at $\msp_z$.
Thus we can write $f_9 (y, z, t) = z^3 + z y b_4 (y^2, t)$ for some quasi-homogeneous polynomial $b_4 (y^2, t)$ of degree $4$.
The assertion (2) follows from \cite[Lemma~3.11]{DG23} and the proof is complete.
\end{proof}

The varieties $Y$ and $\hat{Y}$ are complete intersections of codimension $2$ in $\mbT$ and $\hat{\mbT}$, respectively, defined by the equations $\mcF_1 = \mcF_2 = 0$, where
\[
\begin{split}
\mcF_1 (u, x, y, z, t, v, w) &:= u^{-6/11} \msF_1 (x u^{6/11}, y u^{1/11}, z u^{7/11}, t u^{2/11}, v u^{9/11}, w) \\
&=  - w x + a_{12} (y^2, t) + \lambda y z v u + z^4 u^2 + z^2 y b_4 (y^2, t) u, \\
\mcF_2 (u, x, y, z, t, v, w) &:= u^{-7/11} \msF_2 (x u^{6/11}, y u^{1/11}, z u^{7/11}, t u^{2/11}, v u^{9/11}, w) \\
&= w z + y c_{12} (y^2, t) + v^2 u + \tilde{g}_{14},
\end{split}
\]
with
\[
\tilde{g}_{14} := u^{-7/11} g_{14} (x u^{6/11}, y u^{1/11}, z u^{7/11}, t u^{2/11}).
\]
The variety $\hat{X}$ is the WCI of codimension $2$ in $\mbP (1_u, 1_y, 1_z, 2_t, 3_v, 6_w)$ defined by the equations $\hat{\msF}_1 = \hat{\msF}_2 = 0$, where
\[
\begin{split}
\hat{\msF}_1 (u, y, z, t, v, w) &:= \mcF_1 (u, 1, y, z, t, v, w) \\
&=  - w + a_{12} (y^2, t) + \lambda y z v u + z^4 u^2 + z^2 y b_4 (y^2, t) u, \\
\hat{\msF}_2 (u, y, z, t, v, w) &:= \mcF_2 (u, 1, y, z, t, v, w) \\
&= w z + y c_{12} (y^2, t) + v^2 u + \tilde{g}_{14}.
\end{split}
\]
We set $\hat{a}_6 (y^2, t) := a_{12} (y^2, t)$, $\hat{b}_2 (y^2, t) := b_4 (y^2, t)$, $\hat{c}_6 (y^2, t) := c_{12} (y^2, t)$.
By (1) of Lemma~\ref{lem:eqWCI}, we can write 
\[
\tilde{g}_{14} = u^{-7/11} g_{14} (u^{6/11}, y u^{1/11}, z u^{7/11}, t u^{2/11}) = u \hat{g}_6 (u, y, z, t)
\] 
for some polynomial $\hat{g}_6 (u, y, z, t)$.
Note that $\hat{a}_6 (y^2, t), \hat{b}_2 (y^2, t), \hat{c}_6 (y^2, t)$ and $\hat{g}_6$ are quasi-homogeneous polynomials of degree $4, 3, 6$ and $6$ with respect to the weights $\wt (u, y, z, t) = (1, 1, 1, 2)$. 
By eliminating the variable $w$ in terms of the equation $\hat{\msF}_1 = 0$, the variety $\hat{X}$ is isomorphic to the weighted hypersurface of degree $7$ in $\hat{\mbP} := \mbP (1_u, 1_y, 1_z, 2_t, 3_v)$ defined by 
\begin{equation} \label{eq:defeqhatXhyp}
\hat{\msF} := (\hat{a}_6 (y^2, t) + \lambda y z v u + z^4 u^2 + z^2 y u \hat{b}_2 (y^2, t)) z + y \hat{c}_6 (y^2,t) + v^2 u + u \hat{g}_6 = 0.
\end{equation}
Note that the point $\hat{\msq} \in \hat{X}$ is the point $\msp_z \in \hat{\mbP}$.
%
%\[
%x^2 a_{12}, x z a_{10}, z^2 a_8 
%\]
%\[
%x^4 a_{10}, x^3 z a_{8}, x^2 z^2 a_6, x z^3 a_4, z^4 a_2
%\]
%\[
%x^6 a_8, x^5 z a_6, x^4 z^2 a_4, x^3 z^3 a_2, x^2 z^4
%\]
%\[
%x^8 a_6, x^7 z a_4, x^6 z^2 a_2, z^5 z^3 
%\]
%\[
%x^{10} a_4, x^9 z a_2, x^8 z^2
%\]
%\[
%x^{12} a_2, x^{10} z 
%\]
%\[
%x^{14}
%\]
%\[
%u a_{6}, u z a_5, u z^2 a_4
%\]
%\[
%u^2 a_5, u^2 z a_4, u^2 z^2 a_3, u^2 z^3 a_2, u^2 z^4 a_1
%\]
%\[
%u^3 a_4, u^3, z a_3, u^3 z^2 a_2, u^3 z^3 a_1, u^3 z^4
%\]
%\[
%u^4 a_3, u^4 z a_2, u^4 z^2 a_1, u^4 z^3
%\]
%\[
%u^5 a_2, u^5 z a_1, u^5 z^2
%\]
%\[
%u^6 a_1, u^6 z
%\]
%\[
%u^7
%\]

\begin{Lem} \label{lem:singhatX}
The singular points of $\hat{X}$ are $3$ points $\msp_t, \msp_v \in \hat{X}$ and $\hat{\msq}$, which are of type $\frac{1}{2} (1, 1, 1)$, $\frac{1}{3} (1, 1, 2)$ and $cE_6$, respectively.
Moreover, the following hold.
\begin{enumerate}
\item If $\lambda \ne 0$, then there are at most $4$ divisors of discrepancy $1$ over $\hat{\msq} \in \hat{X}$.
\item If $\lambda = 0$, then there are $3$ divisors of discrepancy $1$ over $\hat{\msq} \in \hat{X}$.
\item There is no divisorial contraction with center $\hat{\msq} \in \hat{X}$ of discrepancy greater than $1$.
\end{enumerate}
\end{Lem}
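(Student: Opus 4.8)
The plan is to first determine the singular locus of $\hat X$ directly from the degree-$7$ equation \eqref{eq:defeqhatXhyp}, and then to deduce the three numerical assertions from Lemma~\ref{lem:cE6discmin} applied at $\hat{\msq}=\msp_z$. The only orbifold points of $\hat{\mbP}=\mbP(1_u,1_y,1_z,2_t,3_v)$ are $\msp_t$ (stabiliser $\mbZ_2$) and $\msp_v$ (stabiliser $\mbZ_3$), and both lie on $\hat X$; hence, away from $\msp_z$, the singular points of $\hat X$ can only be $\msp_t$ and $\msp_v$, provided $\hat X$ is quasismooth off $\msp_z$. I would therefore (i) verify that the non-quasismooth locus of $\hat X$ is exactly $\{\msp_z\}$, (ii) read off the quotient types at $\msp_t$ and $\msp_v$, and (iii) carry out the local $cE_6$ analysis at $\msp_z$.

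For (ii), restricting to the orbifold charts, the monomial $v^2u\in\hat{\msF}$ shows that $u$ is a transverse coordinate at $\msp_v$, so $\hat X$ is quasismooth there and the surviving coordinates $y,z,t$ of weights $1,1,2$ give type $\frac{1}{3}(1,1,2)$. At $\msp_t$ the point is $(u,y,z,v)=(0,0,0,0)$, $t=1$; condition~(2) of Lemma~\ref{lem:eqWCI} (no nontrivial solution of $\hat{a}_6=y\hat{c}_6=0$) forces the coefficient of $t^3$ in $\hat{a}_6$ to be nonzero, whence $\partial_z\hat{\msF}\ne0$ at $\msp_t$, so $z$ is transverse and the surviving coordinates $u,y,v$ of weights $1,1,3$, i.e.\ $1,1,1$ modulo $2$, give type $\frac{1}{2}(1,1,1)$. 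For (i), I would apply the Jacobian criterion to $C_{\hat X}^*$: a direct inspection of the partial derivatives of $\hat{\msF}$, using conditions~(1) and~(2) of Lemma~\ref{lem:eqWCI}, should rule out any singular point lying over a point other than $\msp_z$.

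The core is step (iii). In the chart $z=1$ the germ $\hat{\msq}\in\hat X$ is the hypersurface singularity
\[
\hat{\msF}(u,y,1,t,v)=u^2+uy\bigl(\lambda v+\hat{b}_2(y^2,t)\bigr)+\hat{a}_6(y^2,t)+u\bigl(v^2+\hat{g}_6(u,y,1,t)\bigr)+y\hat{c}_6(y^2,t)=0
\]
in $\mbA^4_{u,y,t,v}$. I would match this with the normal form of Lemma~\ref{lem:cE6discmin} under the correspondence $(x,y,z,t)\leftrightarrow(u,t,y,v)$, so that $x^2=u^2$, $xz\,\lambda t=uy\,\lambda v$, $g_2=\hat{b}_2$, $g_6=\hat{a}_6$ and $t^2=v^2$; in particular the lemma's parameter $\lambda$ is exactly the coefficient $\lambda$ occurring in $\hat{\msF}$. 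Hypothesis~(b) is the weighted homogeneity of $\hat{b}_2,\hat{a}_6$ recorded after \eqref{eq:defeqhatXhyp}, hypothesis~(c) translates, via the weighted-order statement~(1) of Lemma~\ref{lem:eqWCI}, into the required bound on $\hat{g}_6$, and hypothesis~(a) is the quasismoothness away from $\msp_v$ of the weight-$6$ hypersurface $\bigl(u^2+uy(\lambda v+\hat{b}_2)+\hat{a}_6=0\bigr)\subset\mbP(3_u,2_t,1_y,2_v)$. With respect to the weights $\wt(u,y,t,v)=(3,1,2,2)$ of the lemma's blowup, both $y\hat{c}_6(y^2,t)$ and the odd-in-$y$ part of $u\hat{g}_6$ have weighted order $>6$, so they vanish along the exceptional divisor and can be absorbed by an analytic coordinate change; Lemma~\ref{lem:cE6discmin} then applies, giving assertion~(2) (exactly $3$ divisors) and assertion~(3) directly, and assertion~(1) as the upper bound $4$.

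I expect the main obstacle to be hypothesis~(a): proving that the weight-$6$ part of the germ cuts out a surface in $\mbP(3_u,2_t,1_y,2_v)$ quasismooth away from $\msp_v$ is a Jacobian computation entangling $\lambda$, $\hat{b}_2$ and $\hat{a}_6$ simultaneously, and it is presumably the possible degeneration of this condition, together with the higher-order corrections, that accounts for the word ``most'' in assertion~(1). A secondary point requiring care is the bookkeeping that the terms $y\hat{c}_6$ and the non-even part of $\hat{g}_6$ really leave unchanged the configuration of discrepancy-$1$ divisors computed in Lemma~\ref{lem:cE6discmin}; this is cleanest when $\lambda=0$, where the germ reduces to the normal form governed by \cite[Proposition~3.16]{OkSolid} and yields exactly $3$.
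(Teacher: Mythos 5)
Your local computations (the quotient types at $\msp_t$ and $\msp_v$, and the matching of $\hat{f}$ in the chart $z=1$ with Lemma~\ref{lem:cE6discmin} via $(x,y,z,t)\leftrightarrow(u,t,y,v)$, $g_2=\hat{b}_2$, $g_6=\hat{a}_6$, with the lemma's $\lambda$ equal to the coefficient $\lambda$) agree with the paper. But your step (i) has a genuine gap: quasismoothness of $\hat{X}$ away from $\msp_z$ cannot be established by ``direct inspection of the partial derivatives of $\hat{\msF}$ using conditions (1) and (2) of Lemma~\ref{lem:eqWCI}.'' Those conditions constrain only a few strata of monomials; one can specialize the remaining coefficients of $\msF_1,\msF_2$ so that (1)--(2) still hold while $X$ (hence $\hat{X}$) acquires an extra singular point at a general location. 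The genericity you need is the quasismoothness of $X$ itself, and the paper accesses it through the link: using (2) of Lemma~\ref{lem:eqWCI} it shows $(v=z=x=0)_{\mbT}\cap Y=\emptyset$, so the $2$-ray game restricts to an isomorphism $Y\setminus\Gamma\cong\hat{Y}\setminus\hat{\Gamma}$ for explicit irreducible curves; since the Kawamata blowup $Y$ of the quasismooth $X$ has exactly two singular points, of types $\frac{1}{2}(1,1,1)$ and $\frac{1}{9}(1,2,7)$ with only the latter on $\Gamma$, the only singularity of $\hat{X}$ off the curve $\hat{\varphi}(\hat{\Gamma})=(u=y=t=0)$ is the $\frac{1}{2}(1,1,1)$ point, and along that curve a direct check does suffice because $\hat{\msF}=v^2u+h_{14}$ with $h_{14}\in(u,y,t)^2$. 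The same deficiency infects your hypothesis (a), which you correctly single out as the main obstacle: nothing in Lemma~\ref{lem:eqWCI} prevents, say, $\hat{a}_6$ from having a multiple root with $y\neq 0$ at which $\lambda v+\hat{b}_2$ vanishes; hypothesis (a) again encodes singularity information (of $\hat{Y}$ along $\hat{E}$) that ultimately comes from the quasismoothness of $X$ via the link, so a Jacobian computation ``entangling $\lambda$, $\hat{b}_2$, $\hat{a}_6$'' in isolation cannot close it.

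A second soft spot is your absorption claim: ``$y\hat{c}_6$ and the odd-in-$y$ part of $u\hat{g}_6$ have weighted order $>6$, so they \ldots can be absorbed by an analytic coordinate change.'' Weighted order $>6$ is not enough: in the normal form of Lemma~\ref{lem:cE6discmin} every term of weight greater than $6$ is a multiple of $x$ (i.e.\ of $u$ under your matching), whereas $y\hat{c}_6(y^2,t)$ is odd in $y$ and involves no factor of $u$, and no evident coordinate change in $(u,t,y,v)$ removes it while preserving the rest of the form. What actually makes the lemma applicable is that on the relevant $t$-chart this term becomes divisible by the exceptional coordinate (it contributes $v\,y\hat{c}_6$ there), which is why the lemma's preamble allows $g_6(y,z^2,t)$ and $h(x,y,z^2,t)$; a correct write-up must check that the chart computations in the proof of Lemma~\ref{lem:cE6discmin} (the re-embedding $xs+g_6=0$ and the blowups $\psi_1,\psi_3,\psi_5$) survive these corrections, rather than invoke an unspecified change of coordinates. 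Finally, your speculation that the ``at most'' in assertion (1) reflects a possible degeneration of hypothesis (a) is not what the paper does: its proof simply verifies the hypotheses and applies the lemma, whose conclusion for $\lambda\neq 0$ (exactly $4$ divisors) is stronger than the stated bound.
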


\begin{proof}
By the property (2) of Lemma~\ref{lem:eqWCI}, we have 
\[
(v = z = x = 0)_{\mbT} \cap Y = (x = z = v = a_{12} (y^2, t) = y c_{12} (y^2, t) = 0)_{\mbT} = \emptyset.
\]
This shows that the restriction of the first modification $\Theta' \colon \mbT \ratmap \mbT'$ of the $2$-ray game $\mbT \ratmap \hat{\mbT}$ to $Y$ is an isomorphism, where
\[
\text{$\mbT' := \mbT \begin{pNiceArray}{cccc|ccc}[first-row]
u & w & y & t & v & z & x \\
0 & 11 & 2 & 4 & 7 & 3 & 1 \\
-11 & 0 & 1 & 2 & 9 & 7 & 6
\end{pNiceArray}$}.
\]

We set
\[
\begin{split}
\Gamma &:= (z = x = 0)_{\mbT} \cap Y = (x = z = a_{12} (y^2, t) = y c_{12} (y^2, t) + v^2 u = 0)_{\mbT}, \\
\hat{\Gamma} &:= (u = w = y = t = 0)_{\hat{\mbT}} \cap \hat{Y} = (u = w = y = t = 0)_{\hat{\mbT}}.
\end{split}
\]
Both $\Gamma$ and $\hat{\Gamma}$ are irreducible curves.
The map $\theta \colon Y \ratmap \hat{Y}$ restricts to an isomorphism $Y \setminus \Gamma \cong \hat{Y} \setminus \hat{\Gamma}$.
The variety $Y$ has exactly $2$ singular points $\msr_1$ and $\msr_2$ which are of type $\frac{1}{2} (1, 1, 1)$ and $\frac{1}{9} (1, 2, 7)$, respectively.
We have $\msr_1 \notin \Gamma$ and $\msr_2 \in \Gamma$.
It follows that the image of $\msr_1$ is the unique singular point of $\hat{Y} \setminus \hat{\Gamma}$, and thus $\hat{X} \setminus \hat{\varphi} (\hat{\Gamma})$ has a unique singular point which is of type $\frac{1}{2} (1, 1, 1)$.
It remains to consider singularities of $\hat{X}$ along the curve $\hat{\varphi} (\hat{\Gamma}) = (u = y = t = 0)_{\hat{\mbP}} \subset \hat{X}$.
Here we regard $\hat{X}$ as a hypersurface in $\hat{\mbP}$ defined by the equation \eqref{eq:defeqhatXhyp}.
We have 
\[
\hat{\msF} = v^2 u + h_{14} (u, y, z, t, v),
\]
where $h_{14} (u, y, z, t, v) \in (u, y, t)^2$.
This shows that $\hat{\msq}$ is the unique non-quasismooth point of $\hat{X}$ along $\hat{\varphi} (\hat{\Gamma})$.
Thus $\hat{X}$ has exacatly $3$ singular points $\msp_t, \msp_v$ and $\hat{\msq} = \msp_z$.
The singularities of $\hat{X}$ at $\msp_t$ and $\msp_v$ are of type $\frac{1}{2} (1, 1, 1)$ and $\frac{1}{3} (1, 1, 2)$, respectively.

It remains to determine the singularity type of $\hat{\msq} \in \hat{X}$.
By setting $z = 1$, the point $\hat{\msq}$ corresponds to the origin of the affine hypersurface $\hat{U}$ in $\mbA^4$ defined by the polynomial
\begin{equation} \label{eq:hatf}
\begin{split}
\hat{f} &:= \hat{\msF} (u, y, 1, t, v) \\
&= \hat{a}_6 (y^2, t) + \lambda y v u + u^2 + \hat{b}_2 (y^2, t) u y + y \hat{c}_6 (y^2, t) + v^2 u + u \hat{g}_6 \\
&= u^2 + u y (\lambda v + \hat{b}_2 (y^2, t)) + \hat{c}_6 (y^2, t) + u(v^2 + \hat{g}_6).
\end{split}
\end{equation}
It is easy to see that $\hat{\msq} \in \hat{U}$ satisfies the assumption of Lemma~\ref{lem:cE6discmin}, hence all the assertions follow.
\end{proof}

\begin{Lem} \label{lem:curvedeg1}
There is no curve of degree $1$ on $\hat{X}$ which passes through $\hat{\msq}$ but does not pass through any other singular points.
\end{Lem}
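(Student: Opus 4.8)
The plan is to assume that such a curve $C \subset \hat{X}$ exists and derive a contradiction with property~(2) of Lemma~\ref{lem:eqWCI}. Here a \emph{curve of degree $1$} means an integral curve with $A \cdot C = 1$, where $A$ is the class of $\mcO_{\hat{X}} (1)$. Projecting $\hat{X}$ from the weight-one coordinates $u, y, z$ to $\mbP^2$ and using $A \cdot C = 1$, I would first show that $C$ is not contracted (the contracted locus meets neither the hypotheses), hence maps birationally onto a line, so $C$ is rational and is the image of $\nu \colon \mbP^1 \to \hat{\mbP}$ of the form $(\sigma\!:\!\tau) \mapsto (\ell_u\!:\!\ell_y\!:\!\ell_z\!:\!q\!:\!c)$ with $\ell_u, \ell_y, \ell_z$ linear, $q$ quadratic and $c$ cubic in $(\sigma, \tau)$. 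Normalizing $\nu^{-1} (\hat{\msq}) = (0\!:\!1)$, the condition $\hat{\msq} = \msp_z \in C$ forces $\ell_u, \ell_y, q, c$ to vanish at $(0\!:\!1)$ while $\ell_z (0, 1) \ne 0$.

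The key step is to prove $C \subset (u = 0)$. By Lemma~\ref{lem:singhatX}, in the chart $z = 1$ the germ $\hat{\msq} \in \hat{X}$ is the hypersurface $\hat{f} = 0$ of \eqref{eq:hatf}, whose lowest-degree part is $u^2$, all remaining monomials having ordinary degree $\ge 3$ (this uses $t^3 \in \hat{a}_6$). Expanding the local coordinates of $C$ as power series in $\sigma$, every local coordinate vanishes to order $\ge 1$ at $\sigma = 0$, so the $\sigma^2$-coefficient of $\hat{f} \circ \nu$ is exactly the square of the linear coefficient of $u$. As this must vanish and $\ell_u$ is linear with $\ell_u (0, 1) = 0$, I conclude $\ell_u \equiv 0$, that is $C \subset (u = 0)$.

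Next I would exploit the cone structure on $\hat{X} \cap (u = 0)$. By \eqref{eq:defeqhatXhyp} one has $\hat{X} \cap (u = 0) = (\hat{a}_6 (y^2, t) z + y \hat{c}_6 (y^2, t) = 0) \subset \mbP (1_y, 1_z, 2_t, 3_v)$, which is independent of $v$ and hence is the cone with vertex $\msp_v$ over $D := (\hat{a}_6 (y^2, t) z + y \hat{c}_6 (y^2, t) = 0) \subset \mbP (1_y, 1_z, 2_t)$. Since $C$ avoids $\msp_v$, projection from $\msp_v$ does not contract $C$ to a point: if it did, then $y \equiv 0$ on $C$ (because $\hat{\msq}$ has $y = 0$, $z \ne 0$), forcing $C \subset (u = y = 0) \cap \hat{X} = (c_3 t^3 z = 0)$, whose branch through $\hat{\msq}$ is the line $(u = y = t = 0) \ni \msp_v$, a contradiction. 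Thus $C$ maps birationally onto an integral degree-one curve $C' \subset D$, necessarily a graph $C' = (t = a y^2 + b y z + c z^2)$; passing through the image $(0\!:\!1\!:\!0)$ of $\hat{\msq}$ forces $c = 0$, so $t = y (a y + b z)$ along $C'$.

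Finally I substitute $t = y (ay + bz)$ into the equation of $D$. Both $\hat{a}_6 (y^2, t)$ and $\hat{c}_6 (y^2, t)$ then become divisible by $y^3$, so the equation reads $y^3 G (y, z) = 0$ and $C' \subset D$ forces $G \equiv 0$; evaluating $G$ at $y = 0$ gives $c_3 b^3 z^4$, where $c_3 \ne 0$ is the coefficient of $t^3$ in $\hat{a}_6$ (nonzero by Lemma~\ref{lem:eqWCI}(2) with $y = 0$), whence $b = 0$. With $b = 0$ the identity $G \equiv 0$ reduces to $\hat{a}_6 (1, a) = \hat{c}_6 (1, a) = 0$, so $(y, t) = (1, a)$ is a nontrivial common solution of $\hat{a}_6 (y^2, t) = y \hat{c}_6 (y^2, t) = 0$, contradicting property~(2) of Lemma~\ref{lem:eqWCI}. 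The main obstacle is the second step: reading off that the $cE_6$ tangent cone is $u^2$ and using it to trap $C$ inside $(u = 0)$; once this is in hand, the cone structure reduces everything to the short explicit computation above.
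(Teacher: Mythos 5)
Your proposal is correct and ends in the same contradiction with (2) of Lemma~\ref{lem:eqWCI}, but it reaches the crucial containment $C \subset (u = 0)$ by a genuinely different mechanism than the paper. The paper projects $\hat{X}$ to $\mbP (1_u, 1_y, 1_z, 2_t)$, splits into the cases $\deg \pi (C) \in \{1/2, 1\}$, writes $C$ as an explicit complete intersection cut by a linear form $\ell_1 (u, y)$ and two graphs for $t$ and $v$, and pins down $\ell_1 = u$ \emph{globally}: after substitution, $z^5 u^2$ is the only monomial divisible by $z^5$, so divisibility by $\ell_1$ forces $\ell_1 = u$. You instead parametrize $C$ by forms of degrees $(1,1,1,2,3)$ on $\mbP^1$ and trap $C$ in $(u=0)$ \emph{locally}, using that the lowest-degree part of $\hat{f}$ in the chart $z = 1$ is $u^2$ (i.e.\ $\mult_{\hat{\msq}} \hat{X} = 2$ with tangent cone $(u^2=0)$); this is the more conceptual route, directly tied to the $cE_6$ structure, and it is valid --- though note that what the order bound really needs is not $t^3 \in \hat{a}_6$ (that nonvanishing is needed later, for $\mu \ne 0$) but the check that $u \hat{g}_6 (u, y, 1, t)$ has ordinary order $\ge 3$, which holds because $z^6, u z^5, y z^5, z^4 t \notin \hat{g}_6$ (the corresponding monomials of $g_{14}$ would require a negative exponent of $x$). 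Your endgame (cone over $D$ with vertex $\msp_v$, graph $t = a y^2 + b y z$, $b = 0$ from the $z^4$-coefficient, then a common nontrivial zero of $\hat{a}_6$ and $\hat{c}_6$) is the same computation as the paper's ($\bar{\ell}_2 = 0$, $\bar{q}_1 = \alpha y^2$, divisibility by $t - \alpha y^2$). One small omission: from ``not contracted'' you jump to ``$C$ maps birationally onto a degree-one curve,'' silently skipping $\deg (\pi|_C) = 2$ with image the degree-$1/2$ line through $(0\!:\!1\!:\!0)$, which is necessarily $(y = 0)$; but that case puts $C \subset (u = y = 0) \cap \hat{X}$, exactly the containment your contracted-case argument already refutes (equivalently, in your parametrization $\ell_y$ proportional to $\ell_z$ together with $\ell_y (0,1) = 0 \ne \ell_z (0,1)$ forces $\ell_y \equiv 0$), so the gap closes with an argument you already wrote --- this mirrors the paper's separate $\deg \pi (C) = 1/2$ case.
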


\begin{proof}
Suppose that there exists a curve $C \subset \hat{X}$ of degree $1$ which passes through $\hat{\msq}$ but does not pass through any other singular points.
Let 
\[
\pi \colon \hat{X} \ratmap \mbP (1_u, 1_y, 1_z, 2_t)
\] 
be the projection to the coordinates $u, y, z, t$.
Since $C$ does not pass through the point $\msp_w$, the image $\pi (C)$ is a curve and we have $1 = \deg C = \deg (\pi|_C) \deg (\pi (C))$. 
It follows that $\deg (\pi (C)) = 1/2, 1$.

Suppose that $\deg \pi (C) = 1/2$.
Then $\pi (C) = (u = y = 0)$ since it passes through the point $\pi (\hat{\msq}) = (0\!:\!0\!:\!1\!:\!0)$.
Let $\mu \in \mbC$ be the coefficient of $t^3$ in $\hat{a}_6 (y^2, t) = a_{12} (y^2, t)$, which is nonzero by (2) of Lemma~\ref{lem:eqWCI}.
Then the curve $C$ is contained in the set
\[
(u = y = 0)_{\hat{\mbP}} \cap \hat{X} = (u = y = \mu t^3 z = 0),
\]
and thus $C$ is either $(u = y = t = 0)$ or $(u = y = z = 0)$.
This is absurd.

Suppose that $\deg \pi (C) = 1$.
Then $\deg (\pi|_X) = 1$.
We can write
\[
\pi (C) = (t - z \ell_1 (u, y) - q_1 (u, y) = \ell_2 (u, y) = 0) \subset \mbP (1, 1, 1, 2),
\]
where $\ell_i (u, y), q_1 (u, y)$ are linear and quadratic forms, respectively.
Then $C$ can be written as
\[
C = (\ell_1 = t - z \ell_2 (u, y) - q_1 (u, y) = v - z^2 \ell_3 - z q_2 (u, y) - c (u, y) = 0) \subset \hat{\mbP},
\]
where $\ell_3 (u, y), q_2 (u, y)$ and $c (u, y)$ are linear, quadratic and cubic forms, respectively.
We see that the polynomial
\begin{equation} \label{eq:inclusionhatXC}
\hat{\msF} (u, y, z, z \ell_2 + q_1 , z^2 \ell_3 + z q_2 + c) \in \mbC [u, y, z]
\end{equation}
is divisible by $\ell_1$ since $C \subset \hat{X}$.
The monomial $z^5 u$ is the only monomial in the polynomial \eqref{eq:inclusionhatXC} which is divisible by $z^5$ and thus $\ell_1 = u$.
We then have
\[
\phi (y, z) := \hat{\msF} (0, y, z, z \bar{\ell}_2 + \bar{q}_1 , z^2 \bar{\ell}_3 + z \bar{q}_2 + \bar{c}) = 0,
\]
where $\bar{\ell}_i = \ell_i (0, y), \bar{q}_i = q_i (0, y)$ and $\bar{c} = c (0, y)$.
Since 
\[
\hat{\msF} (0, y, z, t, v) = \hat{a}_6 z + y \hat{c}_6,
\] 
we have $\phi = z^4 \bar{\ell}^3_2 + \cdots$, where the omitted terms is the sum of monomials which are not divisible by $z^4$.
Hence $\bar{\ell}_2 = 0$.
We write $\bar{q}_1 = \alpha y^2$ for some $\alpha \in \mbC$.
Then $\phi (y, z) = 0$ implies that $\hat{a}_6$ and $y \hat{c}_6$ are both divisible by $t - \alpha y^2$.
This is impossible by (2) of Lemma~\ref{lem:eqWCI}.
Thus the proof is complete.
\end{proof}

\begin{Lem} \label{lem:hatXcond}
The weighted hypersurface $\hat{X} \subset \hat{\mbP} = \mbP (1_u, 1_y, 1_z, 2_t, 3_v)$ satisfies \emph{Condition~\ref{cond}}.
\end{Lem}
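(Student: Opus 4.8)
The plan is to verify the three items of Condition~\ref{cond} directly from the explicit degree~$7$ equation \eqref{eq:defeqhatXhyp} of $\hat X \subset \hat{\mbP} = \mbP(1_u, 1_y, 1_z, 2_t, 3_v)$. First I would fix the dictionary with the coordinates $x, y, z, t, w$ of Condition~\ref{cond}. Since the Gorenstein point is $\hat{\msq} = \msp_z$ (Lemma~\ref{lem:singhatX}), the coordinate $z$ takes the role of $x$; matching the two distinguished monomials $z^5 u^2$ and $v^2 u$ of $\hat{\msF}$ against $\alpha x^5 y^2$ and $\beta y w^2$ then forces $u, y, v$ to take the roles of $y, z, w$ respectively, with $t$ keeping its role. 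Item~(1) is then immediate: $\hat{\msF}$ has no monomial that is a power of $z$ alone (equivalently $a_{12}(y^2,t)$ has no constant term), so $\msp_z \in \hat X$; this is also contained in Lemma~\ref{lem:singhatX}.

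For item~(2) I would compute the weight-$6$ part of $\hat{\msF}$ for the weight $\bm{w}_1$ that assigns $0, 4, 1, 2, 1$ to $z, u, y, t, v$. A term-by-term inspection of \eqref{eq:defeqhatXhyp}, using that $u\hat g_6$ has $\bm{w}_1$-weight at least $8$ (a consequence of $g_{14} \in (x,z)^2$ together with the weighted-order estimate of Lemma~\ref{lem:eqWCI}(1)), yields
\[
\hat{\msF}_{\bm{w}_1 = 6} = z\, a_{12}(y^2, t) + \lambda\, y z^2 v u + v^2 u,
\]
which is exactly the shape predicted by Lemma~\ref{lem:condeq}(2). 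To prove its irreducibility I would regard it as a quadratic polynomial in $v$, namely $u\, v^2 + \lambda\, u y z^2\, v + z\, a_{12}(y^2, t)$; since $a_{12}$ contains $t^3$ with nonzero coefficient (Lemma~\ref{lem:eqWCI}(2)), its constant term $z\, a_{12}$ is a nonzero non-square, so the discriminant is not a perfect square in $\mbC[u,y,z,t]$ and the quadratic is irreducible. This establishes (2) and identifies the $\bm{w}_1$-blowup as the one treated in Lemma~\ref{lem:hatpsi1}.

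Item~(3), for the weight $\bm{w}'_2$ assigning $0, 2, 1, 2, 1$ to $z, u, y, t, v$, is where the real work lies and is the step I expect to be the main obstacle. One must show that the low-weight graded pieces collapse to $\hat{\msF}_{\bm{w}'_2 = 4} = \alpha z^5 u^2 + \beta v^2 u$ and $\hat{\msF}_{\bm{w}'_2 = 5} = 0$, while $t^3$ survives in $a_{12}$. The difficulty is that three families of terms a priori threaten these equalities: the cross term $\lambda y z^2 v u$, of $\bm{w}'_2$-weight $4$; the term $z^3 y u \hat b_2(y^2, t)$ coming from $b_4$, of weight $5$; and a weight-$5$ contribution $u^2 y z^4$ of $u\hat g_6$. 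I would first use the weighted-order bound of Lemma~\ref{lem:eqWCI}(1) to confine $u\hat g_6$ to weight $\ge 5$ and to pin down its single weight-$5$ monomial, and then address the clearing of the cross term and the $b_4$- and $\hat g_6$-contributions through a weight-preserving normalization of the coordinates $v$ and $t$ (most naturally carried out on the codimension~$2$ model of $\hat X$ in $\mbP(1,1,1,2,3,6)$, before $w$ is eliminated, so as to keep the normal form of Lemma~\ref{lem:singhatX} intact). Reconciling this normalization with the value of $\lambda$ — and verifying that it simultaneously removes the offending weight-$4$ and weight-$5$ monomials — is the delicate point; once this is done, reading off the prescribed forms of $\hat{\msF}_{\bm{w}'_2 = 4}$, $\hat{\msF}_{\bm{w}'_2 = 5}$ and the presence of $t^3$ is routine.
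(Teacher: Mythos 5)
Your dictionary ($z\mapsto x$, $u\mapsto y$, $y\mapsto z$, $t\mapsto t$, $v\mapsto w$) is exactly the renaming that the paper's proof invokes (the paper's entire proof is the single sentence that the claim is ``straightforward'' after this substitution, with a typo in the printed substitution), and your verifications of items (1) and (2) are correct and supply detail the paper omits: the bound $\bm{w}_1(u\hat{g}_6)\ge 8$ does follow from $g_{14}\in(x,z)^2$ together with the order estimate of Lemma~\ref{lem:eqWCI}(1), and $\hat{\msF}_{\bm{w}_1=6}=z\,\hat{a}_6(y^2,t)+\lambda uyz^2v+uv^2$ is the shape of Lemma~\ref{lem:condeq}(2) with $g_2=0$. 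One small repair: ``the constant term $z\,a_{12}$ is a non-square'' does not by itself imply the discriminant is a non-square; argue instead that the discriminant $\lambda^2u^2y^2z^4-4uz\,\hat{a}_6$ has $u$-adic valuation exactly $1$ (using $\hat{a}_6\ne 0$, which follows from $t^3\in a_{12}$), hence is not a square, or rule out a factorization $(uv+b)(v+d)$ directly since $u\nmid z\,\hat{a}_6$.

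The genuine gap is item (3), which you explicitly leave as a plan, and the plan as stated cannot succeed. With $\bm{w}'_2(u,y,z,t,v)=(2,1,0,2,1)$, the weight-$4$ part of \eqref{eq:defeqhatXhyp} is $u^2z^5+uv^2+\lambda uyz^2v$, and when $\lambda\ne 0$ no weight-compatible normalization of $v$ and $t$ --- indeed no automorphism of $\hat{\mbP}$ fixing $\msp_z$ --- reduces it to the literal two-monomial form $\alpha u^2z^5+\beta uv^2$ demanded by Condition~\ref{cond}(3): completing the square $v\mapsto v-\tfrac{\lambda}{2}yz^2$ merely trades the cross term for $-\tfrac{\lambda^2}{4}uy^2z^4$, again of weight $4$, and invariantly, modulo $u$, the binary quadratic $v^2+\lambda yv$ in $(y,v)$ transforms only by linear substitutions of $(y,v)$, so being a perfect square (the target $\beta v^2$) is preserved; a similar bookkeeping shows the weight-$5$ coefficient of $uyz^3t$ (present whenever $t\in\hat{b}_2$) cannot be removed when $\lambda=0$. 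What makes the lemma tenable is that every offending weight-$4$ and weight-$5$ monomial is divisible by $u$ (Condition's $y$) --- consistent with the term $\gamma x^2yzw$ that the paper itself retains in Lemma~\ref{lem:condeq}(2) --- so Condition~\ref{cond}(3) must be read as requiring the low-weight parts to be divisible by $y$, with $h$ in the re-embedding \eqref{eq:reembX} and in Lemma~\ref{lem:hatpsi2} then carrying the extra terms. Your proposal neither carries out the normalization nor detects that, as literally formulated, the target is unattainable for $\lambda\ne 0$, so the crucial step is missing; in fairness, the paper's one-line proof glosses exactly the same point, and your computation of the three obstructing terms is the accurate starting point for the needed repair.
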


\begin{proof}
This is straightforward after replacing $(u, y, z, t, v) \mapsto (z, x, z, t, w)$.
\end{proof}

%%%%%%%%%%%%%%%%%%%%%%%%%%%%%%%%%%
\subsection{Construction of a link in the case $\lambda \ne 0$}
\label{sec:anothtelink}
%%%%%%%%%%%%%%%%%%%%%%%%%%%%%%%%%%

We keep the settings as in \S~\ref{sec:hatX}.
We show that if $\lambda \ne 0$, then there is an elementary link $\hat{X} \ratmap X$ which is different from $\sigma^{-1}$.

Let $\hat{\chi} \in \Aut (\hat{X})$ be the automorphism of $\hat{X}$ defined by
\[
(u\!:\!y\!:\!t\!:\!v\!:\!z) \overset{\chi}{\mapsto} (u\!:\!y\!:\!t\!:\!-v - \lambda y z^2\!:\!z),
\]
and we set
\[
\hat{\varphi}' := \hat{\chi} \circ \hat{\varphi} \colon \hat{Y} \to \hat{X}.
\]
Suppose that $\lambda \ne 0$.
In this case, to distinguish $\hat{\varphi}$ and $\hat{\varphi}'$, we use the symbols $\hat{Y}' := \hat{Y}$ and $\hat{E}' := \hat{E}$ for $\hat{\varphi}'$, that is, we denote $\hat{\varphi}' \colon \hat{Y}' \to \hat{X}$ and the $\hat{\varphi}'$-exceptional divisor is $\hat{E}'$.
The divisors $\hat{E}$ and $\hat{E}'$ define distinct valuation since $\nu_{\hat{E}} (v) = 2$ and $\nu_{\hat{E}'} (v) = \nu_{\hat{E}} (-v - \lambda y) = 1$.
We define
\[
\sigma' := \hat{\chi}^{-1} \circ \sigma \colon X \ratmap \hat{X},
\]
which is clearly an elementary link (initiated by $\varphi$).
Note that $\sigma'$ and $\sigma$ differ only up to the composite of the automorphism $\hat{\chi}$ of $\hat{X}$. 

\begin{Lem}
\label{lem:anotherlink}
Suppose that $\lambda \ne 0$.
Then, $\hat{\varphi}'$ initiates the elementary link ${\sigma'}^{-1} = \sigma^{-1} \circ \hat{\chi} \colon \hat{X} \ratmap X$ which is different from $\sigma^{-1}$.
Moreover, the composite
\[
X \overset{\sigma}{\ratmap} \hat{X} \overset{{\sigma'}^{-1}}{\ratmap} X
\]
is a birational involution of $X$ which is defined by
\[
(x\!:\!y\!:\!z\!:\!t\!:\!v\!:\!w) \mapsto
 \left(x\!:\!y\!:\!z\!:\!t\!:\!-v - \frac{\lambda y z^2}{x}\!:\!w \right).
 \]
\end{Lem}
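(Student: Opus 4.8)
The plan is to prove Lemma~\ref{lem:anotherlink} by carefully tracking the valuations and the geometry through the $2$-ray game already set up in \S\ref{sec:hatX}. The statement has three parts: first, that $\hat{\varphi}'$ initiates the link ${\sigma'}^{-1}$; second, that this link is genuinely different from $\sigma^{-1}$; and third, the explicit formula for the resulting birational involution of $X$. The first part is essentially formal once we understand that $\hat{\chi}$ is an automorphism of $\hat{X}$: since $\sigma$ is an elementary link with inverse $\sigma^{-1}$ initiated by $\hat{\varphi}$, composing with $\hat{\chi}$ simply transports the whole link structure, so $\hat{\varphi}' = \hat{\chi}\circ\hat{\varphi}$ initiates ${\sigma'}^{-1} = \sigma^{-1}\circ\hat{\chi}$. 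I would first verify that $\hat{\chi}$ is well defined, i.e.\ that the substitution $v\mapsto -v-\lambda yz^2$ preserves the defining equation \eqref{eq:defeqhatXhyp}; this holds precisely because the only occurrences of $v$ in $\hat{\msF}$ are through $\lambda yzvu + v^2 u = u(v^2 + \lambda yz\cdot v)$, and completing the square shows this expression is invariant under $v \mapsto -v - \lambda yz$ when we set $z=1$, matching the stated formula $-v-\lambda yz^2$ in the homogeneous coordinates.

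\textbf{Distinctness of the two links.} For the second part, the key is the valuation computation already indicated in the text: $\nu_{\hat{E}}(v) = 2$ while $\nu_{\hat{E}'}(v) = \nu_{\hat{E}}(-v-\lambda yz^2) = 1$. I would justify this by noting that $\hat{E}$ is the exceptional divisor of $\hat{\varphi}$ over $\hat{\msq}\in\hat{X}$, and in the weighted blowup coordinates the weight of $v$ is $3$ while that of $yz^2$ is lower, so the linear term $-\lambda yz^2$ dominates and lowers the order. Since $\hat{E}$ and $\hat{E}'$ define distinct valuations, the two divisorial contractions $\hat{\varphi}$ and $\hat{\varphi}'$ are inequivalent, and hence the links they initiate, $\sigma^{-1}$ and ${\sigma'}^{-1}$, are distinct. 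This is clean provided the valuation claim is pinned down rigorously.

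\textbf{The involution formula.} The third part is where the real work lies. I would compute the composite $\tau := {\sigma'}^{-1}\circ\sigma = \sigma^{-1}\circ\hat{\chi}\circ\sigma \colon X \ratmap X$ by chasing coordinates through the explicit birational maps $\Phi, \Theta, \hat{\Phi}$ defined in \S\ref{sec:hatX}, together with $\hat{\chi}$ on $\hat{X}$. Concretely, $\sigma$ sends a point of $X$ to $\hat{X}$ via $\hat{\varphi}\circ\theta\circ\varphi^{-1}$, with the coordinate on $\hat{X}$ recovered from the homogenizing relations; the substitution $v\mapsto -v-\lambda yz^2$ on $\hat{X}$ then pulls back to $X$ through the inverse maps. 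The expected answer $(x\!:\!y\!:\!z\!:\!t\!:\!v\!:\!w)\mapsto(x\!:\!y\!:\!z\!:\!t\!:\!-v-\tfrac{\lambda yz^2}{x}\!:\!w)$ suggests that all coordinates except $v$ are fixed, and $v$ transforms by the affine map reflecting $\hat{\chi}$; the factor $1/x$ appears because of the weight discrepancy between $v$ (weight $7$) and $yz^2$ (weight $8$) on $X$, so one power of $x$ (weight $1$) must be divided out to balance degrees. The main obstacle will be managing the interaction of the two elimination steps — the variable $w$ was eliminated via $\hat{\msF}_1$ to pass from the WCI to the hypersurface $\hat{X}$, and $v$ was partially adjusted in Lemma~\ref{lem:eqWCI} by the substitution $v\mapsto v-z\phi_4/2$ — so I must confirm these normalizations are compatible and do not introduce extra terms in the final formula. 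Once the formula is established, involutivity follows by direct substitution: applying $v\mapsto -v-\tfrac{\lambda yz^2}{x}$ twice returns $v$, which verifies $\tau^2 = \mathrm{id}$ and confirms $\tau$ is a birational involution.
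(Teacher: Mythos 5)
Your route is the same as the paper's: the paper's entire proof consists of your part (ii) --- the valuations of $\hat{E}$ and $\hat{E}'$ differ, hence the links $\sigma^{-1}$ and ${\sigma'}^{-1}$ differ --- with everything else dismissed as ``straightforward''; your parts (i) and (iii) are expansions of that gloss, and your verification that $\hat{\chi}$ preserves \eqref{eq:defeqhatXhyp} (the $v$-dependence of $\hat{\msF}$ is exactly $u(v^2+\lambda yz^2v)$, invariant under $v\mapsto -v-\lambda yz^2$) is correct. However, two of your steps are flawed. First, a numerical slip: you quote $\nu_{\hat{E}}(v)=2$ and then justify it by saying ``the weight of $v$ is $3$'' --- these are inconsistent, and $2$ is correct. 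You read the weights off the displayed formula for $\hat{\Phi}$ (or the grading of $\hat{\mbP}$), but the grading matrix of $\hat{\mbT}$ is the authoritative datum: for the six images to have bidegrees on a common ray proportional to $(1,6,1,2,3,1)$, the map must be $(u\!:\!w\!:\!y\!:\!t\!:\!v \,|\, z\!:\!x)\mapsto(ux^3\!:\!wx^7\!:\!yx\!:\!tx^2\!:\!vx^2\!:\!z)$, i.e.\ the blowup weights at $\msp_z$ are $\wt(u,w,y,t,v)=(3,7,1,2,2)$ (matching the discrepancy-one weights $(3,2,1,2)$ of Lemma~\ref{lem:cE6discmin} under the variable correspondence of Lemma~\ref{lem:singhatX}). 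Your distinctness conclusion survives only because what matters is $\nu_{\hat{E}}(yz^2)=1<\nu_{\hat{E}}(v)$, which holds for either weight, but as written the step rests on a wrong number.

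Second, and more substantively, your final verification of involutivity is inadequate: checking that $v\mapsto -v-\lambda yz^2/x$ squares to the identity on the $v$-coordinate alone does not prove $\tau^2=\mathrm{id}$, because you have not shown that $\tau$ maps $X$ into $X$ --- and with $w$ literally held fixed it does not, since substituting into $\msF_2$ leaves the term $\bigl(\lambda yz^2/x\bigr)\bigl(2v+\lambda yz^2/x\bigr)$, which is nonzero modulo $(\msF_1,\msF_2)$. Carrying out the coordinate chase you outline (this is exactly where your worry about the $w$-elimination bites), the relation $\msF_1=0$, i.e.\ $w=(a_{12}+\lambda yzv+z^4+z^2yb_4)/x$, forces the dependent coordinate to transform as
\[
w \mapsto w-\frac{2\lambda yzv}{x}-\frac{\lambda^2y^2z^3}{x^2},
\]
and with this entry both $\msF_1$ and $\msF_2$ are preserved identically, while involutivity follows from $2xv'+\lambda yz^2=-(2xv+\lambda yz^2)$. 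The paper's displayed formula suppresses the $w$-entry because $w$ is determined by the other coordinates on the chart $x\neq 0$; your write-up must make this transformation explicit, since otherwise the concluding ``direct substitution'' step, as you describe it, fails.
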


\begin{proof}
The valuations of $\hat{E}$ and $\hat{E}'$ are different, hence the associated elementary links $\sigma^{-1}$ and ${\sigma'}^{-1}$ are different.
The rest is straightforward. 
\end{proof}

%%%%%%%%%%%%%%%%%%%%%%%%%%%%%%%%%%
\subsection{Proof of Theorem~\ref{mainthm}}
%%%%%%%%%%%%%%%%%%%%%%%%%%%%%%%%%%

We keep the settings as in \S~\ref{sec:hatX}.

\begin{Prop} \label{prop:classiflink}
The following assertions hold.
\begin{enumerate}
\item If $\lambda = 0$, then $\sigma^{-1} \colon \hat{X} \ratmap X$ is the unique elementary link from $\hat{X}$.
\item If $\lambda \neq 0$, then $\sigma^{-1} \colon \hat{X} \ratmap X$ and ${\sigma'}^{-1} \colon \hat{X} \ratmap X$ are the elementary links from $\hat{X}$.
\end{enumerate}
\end{Prop}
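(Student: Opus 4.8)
The plan is to classify the Sarkisov extractions from $\hat{X}$ and the elementary links they initiate, using the principle recalled in \S\ref{sec:prelim} that every Sarkisov extraction is a maximal extraction. Since $\hat{X}$ is a Fano $3$-fold of Picard number $1$, every elementary link from $\hat{X}$ begins with a Sarkisov extraction, so it suffices to locate all maximal centers and decide, for each, whether the associated divisorial contraction is a Sarkisov extraction and which link it initiates. By Lemma~\ref{lem:singhatX} the only singular points of $\hat{X}$ are the quotient singularities $\msp_t$ (type $\tfrac{1}{2}(1,1,1)$) and $\msp_v$ (type $\tfrac{1}{3}(1,1,2)$) together with the Gorenstein point $\hat{\msq}$ (type $cE_6$). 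Thus the candidate maximal centers are the smooth points, the curves on $\hat{X}$, and these three points.

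First I would dispose of all centers other than $\hat{\msq}$. Smooth points and curves are ruled out by the standard maximal-singularity estimates, the only delicate case being a curve of degree $1$ through $\hat{\msq}$, which is excluded by Lemma~\ref{lem:curvedeg1}. For $\msp_v$ I would relabel the coordinates as in Lemma~\ref{lem:hatXcond} and read off from \eqref{eq:defeqhatXhyp} that the defining equation has the shape $w^2 y + w\,(\lambda x^2 y z) + f_7$; in the notation of Lemma~\ref{lem:excl1/3} this gives $\ell = y$ and $f_4 = \lambda x^2 y z$, so $\ell \mid f_4$ for every value of $\lambda$, and case~(2) of Lemma~\ref{lem:excl1/3} shows that the Kawamata blowup of $\msp_v$ is not a maximal extraction. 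The remaining point $\msp_t$ is excluded by running the $2$-ray game on its Kawamata blowup exactly as in Lemmas~\ref{lem:hatpsi1} and \ref{lem:hatpsi2}, concluding that $-K$ lies on the boundary of the movable cone and hence that the blowup is not a Sarkisov extraction.

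The heart of the proof is the point $\hat{\msq}$. Being Gorenstein and terminal, any divisorial contraction centred at $\hat{\msq}$ has positive integral discrepancy, and Lemma~\ref{lem:singhatX}(3) forbids discrepancy exceeding $1$; hence every Sarkisov extraction over $\hat{\msq}$ has discrepancy exactly $1$. I would enumerate all such divisors and separate them by the pair of valuations $(\nu_E(u), \nu_E(v))$. The contraction $\hat{\varphi}$ of \S\ref{sec:hatX} gives $\hat{E}$ with $(\nu(u),\nu(v)) = (3,2)$ and initiates $\sigma^{-1}$; when $\lambda \neq 0$ the contraction $\hat{\varphi}'$ of \S\ref{sec:anothtelink} gives $\hat{E}'$ with $(3,1)$ and initiates ${\sigma'}^{-1}$; and, since $\hat{X}$ satisfies Condition~\ref{cond} by Lemma~\ref{lem:hatXcond}, the two weighted blowups of Lemmas~\ref{lem:hatpsi1} and \ref{lem:hatpsi2} produce divisors with valuations $(4,1)$ and $(2,1)$. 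These four pairs are pairwise distinct, so when $\lambda \neq 0$ the four divisors exhaust those counted in Lemma~\ref{lem:singhatX}(1), while when $\lambda = 0$ the divisor $\hat{E}'$ is absent and the three divisors $\hat{E}$, $(4,1)$, $(2,1)$ exhaust those of Lemma~\ref{lem:singhatX}(2). As the divisors with valuations $(4,1)$ and $(2,1)$ are shown in Lemmas~\ref{lem:hatpsi1} and \ref{lem:hatpsi2} to have $-K$ on the boundary of $\bMov$ and hence not to be Sarkisov extractions, the only Sarkisov extractions over $\hat{\msq}$ are $\hat{\varphi}$ and, when $\lambda \neq 0$, $\hat{\varphi}'$.

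Assembling these facts, the Sarkisov extractions from $\hat{X}$ are exactly $\hat{\varphi}$ and, when $\lambda \neq 0$, $\hat{\varphi}'$, which initiate $\sigma^{-1}$ and ${\sigma'}^{-1}$ respectively; this yields assertion~(1) (for $\lambda=0$) and assertion~(2) (for $\lambda\neq 0$). I expect the main obstacle to be the bookkeeping at $\hat{\msq}$: one must check that the two weighted blowups excluded in Lemmas~\ref{lem:hatpsi1} and \ref{lem:hatpsi2} are genuinely different from $\hat{E}$ and $\hat{E}'$, and that together with $\hat{E}$ (and $\hat{E}'$) they account for \emph{every} divisor of discrepancy $1$ over $\hat{\msq}$. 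It is precisely the exact enumeration in Lemma~\ref{lem:singhatX}, combined with the valuation computation $(3,2)$, $(3,1)$, $(4,1)$, $(2,1)$, that makes this count close.
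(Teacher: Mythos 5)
Your proposal reproduces the architecture of the paper's proof almost exactly: exclude smooth points, curves, and the two quotient singular points as maximal centers; then, at the $cE_6$ point $\hat{\msq}$, use Lemma~\ref{lem:singhatX} to cap the discrepancy at $1$, enumerate the discrepancy-$1$ divisors, and match them against $\hat{E}$, $\hat{E}'$, and the two excluded blowups of Lemmas~\ref{lem:hatpsi1} and \ref{lem:hatpsi2}. Your details at $\hat{\msq}$ and $\msp_v$ are correct and in places sharper than the paper's: the valuation pairs $(\nu(u),\nu(v)) = (3,2)$, $(3,1)$, $(4,1)$, $(2,1)$ for $\hat{E}$, $\hat{E}'$, $\hat{F}_1$, $\hat{F}_2$ are right and make explicit what the paper dismisses as ``easy to observe,'' and your identification $\ell = y$, $f_4 = \lambda x^2 y z$ (so $\ell \mid f_4$ for every $\lambda$) after the relabelling of Lemma~\ref{lem:hatXcond} is exactly how the paper applies case~(2) of Lemma~\ref{lem:excl1/3} to $\msp_v$. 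You also silently correct the misprint in item~(2) of the statement ($\lambda = 0$ should read $\lambda \neq 0$), which is the intended reading.

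There is, however, one genuine gap: the exclusion of the $\tfrac{1}{2}(1,1,1)$ point $\msp_t$. The paper does not run a $2$-ray game there; it excludes $\msp_t$ (and smooth points) as maximal centers outright by citing \cite[Lemmas~4.5 and 4.9]{OkSolid}. Your substitute --- ``running the $2$-ray game on its Kawamata blowup exactly as in Lemmas~\ref{lem:hatpsi1} and \ref{lem:hatpsi2}, concluding that $-K$ lies on the boundary of the movable cone'' --- is an unperformed computation whose outcome you assert rather than verify; nothing in your argument shows the game at $\msp_t$ terminates with $-K$ on the boundary of $\bMov$, and the standard exclusions of $\tfrac{1}{2}(1,1,1)$ points on these hypersurfaces proceed by the maximal-center (test class) method rather than by a boundary-of-$\bMov$ argument, so the claimed shape of the game is not something you can take for granted. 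Either carry out the game or invoke the known exclusion. Two further small compressions are worth flagging: for curves you need \cite{Kawamata} (no divisorial contraction is centered on a curve through a terminal quotient singularity) together with the bound $\deg C < 7/6$ of \cite[Lemma~2.9]{OkII} and the argument of \cite{CPR} for degree-$1$ curves in the smooth locus, which your ``standard estimates'' elide; and at $\hat{\msq}$ your count of valuations only closes the argument if each discrepancy-$1$ valuation is realized by at most one divisorial contraction, so that excluding the specific blowups $\hat{\psi}_1, \hat{\psi}_2$ excludes every Sarkisov extraction with those valuations --- the paper secures this with \cite[Lemma~3.4]{Kawakita01}, which your write-up assumes implicitly.
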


\begin{proof}
By \cite[Lemmas~4.5 and 4.9]{OkSolid}, smooth points on $\hat{X}$ and the $\frac{1}{2} (1, 1, 1)$ point $\msp_t \in \hat{X}$ are not maximal centers. 
The defining equation \eqref{eq:defeqhatXhyp} is of the form
\[
v^2 u + \lambda y z u v + \cdots = 0,
\]
where the omitted term is a polynomial which do not involve the variable $v$.
By Lemma~\ref{lem:excl1/3}, the $\frac{1}{3} (1, 1, 2)$ point $\msp_v \in \hat{X}$ is not a maximal center.

We show that no curve on $\hat{X}$ is a maximal center.
Suppose that there is an irreducible curve $C$ on $\hat{X}$ which is a maximal center.
If $C$ passes through a terminal quotient singular point, then there is no divisorial contraction with center $C$ by \cite{Kawamata}, and hence $C$ cannot be a maximal center.
This is a contradiction, and $C$ does not pass through a terminal quotient singular point.
This in particular implies $\deg C := (\hat{A} \cdot C) \in \mbZ$, where $\hat{A} = - K_{\hat{X}}$ is the Weil divisor class such that $\mcO_{\hat{X}} (\hat{A}) \cong \mcO_{\hat{X}} (1)$.  
By \cite[Lemma~2.9]{OkII}, we have $\deg C < 7/6$, and hence $\deg C = 1$.
By Lemma~\ref{lem:curvedeg1}, the curve $C$ does not pass through $\hat{\msq}$, that is, $C$ is contained in the smooth locus of $\hat{X}$.
Then, by completely the same argument as in Step 2 of the proof of \cite[Theorem~5.1.1]{CPR}, the curve $C$ is not a maximal center.
This is a contradiction and we conclude that no curve on $\hat{X}$ is a maximal center.

It remains to check whether a divisorial contraction with center $\hat{\msq} \in \hat{X}$ is a Sarkisov center or not.
By Lemma~\ref{lem:hatXcond}, there are two divisors of discrepancy $1$ over $\hat{\msq} \in \hat{X}$ given in Lemmas~\ref{lem:hatpsi1} and \ref{lem:hatpsi2}.
Let $\hat{\psi}_1 \colon \hat{W}_1 \to \hat{X}$ and $\hat{\psi}_2 \colon \hat{W}_2 \to \hat{X}$ be the weighted blowups given in Lemma~\ref{lem:hatpsi1} and \ref{lem:hatpsi2}, respectively, and let $\hat{F}_i$ be the $\hat{\psi}_i$-exceptional divisor for $i = 1, 2$.
In the case $\lambda \ne 0$, let $\hat{\varphi}' \colon \hat{Y}' \to \hat{X}$ be the divisorial contraction given in \S~\ref{sec:anothtelink}.
It is easy to observe that $\hat{E}, \hat{F}_1, \hat{F}_2$ (and also $\hat{E}'$ in the case $\lambda \ne 0$) define mutually distinct valuations of the function field of $\hat{X}$.
Thus, by Lemma~\ref{lem:singhatX}, these exhaust the divisors of discrepancy $1$ over $\hat{\msq} \in \hat{X}$.
By (3) of Lemma~\ref{lem:singhatX} and \cite[Lemma~3.4]{Kawakita01}, there exist no divisorial contraction with center $\hat{\msq} \in \hat{X}$ other than $\hat{\varphi}, \hat{\psi}_1, \hat{\psi}_2$ and $\hat{\varphi}'$, and $\hat{\psi}_i$ is a divisorial contraction if and only if $\hat{W}_i$ has only terminal singularities.
As it is already explained, the divisorial contractions $\hat{\varphi}$ and $\hat{\varphi}'$ initiate the elementary links $\sigma^{-1}$ and ${\sigma'}^{-1}$, respectively.
If $\hat{\psi}_i$ is a divisorial contraction, then it is not a Sarkisov extraction by Lemmas~\ref{lem:hatpsi1} and \ref{lem:hatpsi2}.
\end{proof}

\begin{proof}[Proof of Theorem~\ref{mainthm}]
By \cite[Corollaries~7.2 and 7.11]{DG23}, no smooth point and no curve on $X$ is a maximal center.
The point $\msq \in X$ is the unique singular point of $X$.
The Kawamata blowup $\varphi \colon Y \to X$ is the unique divisorial contraction with center $\msq \in X$ and it initiates the link $\sigma$.
It follows that $\sigma$ is the unique elementary link from $X$.
Combining this with Proposition~\ref{prop:classiflink}, Theorem~\ref{mainthm} follows immediately.
\end{proof}

%%%%%%%%%%%%%%%%%%%%%%%%%%%%%%%%%%

%%%%%%%%%%%%%%%%%%%%%%%%%%%%%%%%%%
%%%%%%%%%%%%%%%%%%%%%%%%%%%%%%%%%%
\end{document}